\documentclass[12pt]{article}

\usepackage{upgreek}

\bibliographystyle{plainurl}% the recommnded bibstyle
\usepackage{hyperref}

\usepackage[normalem]{ulem}
\usepackage{bm}
\usepackage{amsthm}
\usepackage{amsmath}
\usepackage{amssymb}
\usepackage{mathrsfs}
\usepackage{mathtools}
\usepackage{graphicx}
\usepackage{tikz}
\usetikzlibrary{arrows}
\usepackage{enumitem}
\usepackage{comment}
\usepackage{cases}
\usepackage[capitalise]{cleveref}
\usepackage{fullpage}
\usepackage{appendix}

\newtheorem{theorem}{Theorem}

\newtheorem{lemma}[theorem]{Lemma}

\newtheorem{corollary}[theorem]{Corollary}

\theoremstyle{definition}
 
\theoremstyle{remark}

\theoremstyle{remark}

\numberwithin{theorem}{section}

%%%%%%%%%%%%%%%%%%%%%%% NATURAL NUMBERS, INTEGERS, ETC. %%%%%%%%%%%%%%%%
\providecommand{\R}{}

\providecommand{\N}{}

\renewcommand{\R}{\mathbb{R}}

\renewcommand{\N}{{\mathbb N}}

%%%%%%%%%%%%%%%%%%%%%%%%%%%%%%%%%%%%%%%%%%%%%%%%%%%%%

%%%%%%%%%%%%%%% BASIC PROBABILITY %%%%%%%%%%%%%%%%%%%%%%%%%%%%
\newcommand{\E}{\textsf{\upshape E}}
\newcommand{\prob}{\textsf{\upshape Pr}}

% \Cprob Bases bracket size on term before conditioning; \probC on term after conditioning

%%%%%%%%%%%%%%%%%%%%%%%%%%%%%%%%%%%%%%%%%%%%%%%%%%%%%

%%%%%%%%%%%%%%%%%%%%%%%%%%%%% SETS %%%%%%%%%%%%%%%%%%%%%

%%%%%%%%%%%%%%%%%%%%%%%%%%%%%%%%%%%%%%%%%%%%%%%%%%%%%%

%%%%%%%%%%%%%%%%%%%%%%%%%%%%% BOLDFACE %%%%%%%%%%%%%%%%%%%%

%%%%%%%%%%%%%%%%%%%%%%%%%%%%%%%%%%%%%%%%%%%%%%%%%%%%%%%%
\newcommand{\Gnp}{\mathcal{G}(n,p)}

\begin{document}

\title{Multiset Metric Dimension of Binomial Random Graphs}

\author{
Austin Eide\thanks{Department of Mathematics, Toronto Metropolitan University, Toronto, ON, Canada; e-mail: \texttt{austin.eide@torontomu.ca}}, 
Pawe\l{} Pra\l{}at\thanks{Department of Mathematics, Toronto Metropolitan University, Toronto, ON, Canada; e-mail: \texttt{pralat@torontomu.ca}}
}

\maketitle

\begin{abstract}
For a graph $G = (V,E)$ and a subset $R \subseteq V$, we say that $R$ is \textit{multiset resolving} for $G$ if for every pair of vertices $v,w$, the \textit{multisets} $\{d(v,r): r \in R\}$ and $\{d(w,r):r \in R\}$ are distinct, where $d(x,y)$ is the graph distance between vertices $x$ and $y$. The \textit{multiset metric dimension} of $G$ is the size of a smallest set $R \subseteq V$ that is multiset resolving (or $\infty$ if no such set exists). This graph parameter was introduced by Simanjuntak, Siagian, and Vitr\'{i}k in 2017~\cite{simanjuntak2017multiset}, and has since been studied for a variety of graph families. We prove bounds which hold with high probability for the multiset metric dimension of the binomial random graph $\Gnp$ in the regime $d = (n-1)p = \Theta(n^{x})$ for fixed $x \in (0,1)$.
\end{abstract}

\section{Introduction}

\subsection{Background}

For a finite, connected graph $G = (V, E)$ and vertices $v,w \in V$, we let $d(v,w)$ denote the distance between $v$ and $w$, that is, the length of a shortest path in $G$ between $v$ and $w$. For a vertex $v \in V$ and integer $k \geq 0$, we let 
$$
	S_{k}(v) := \{w \in V \,:\,d(v,w) = k\} \quad \text{ and } \quad N_{k}(v) := \{w \in V\,:\,d(v,w) \leq k\} = \bigcup_{j=0}^{k}S_{j}(v)
$$
be the sphere and, respectively, the ball of radius $k$ around $v$.
Similarly, for a subset $V' \subseteq V$ and $k \geq 0$, let
$$
	S_{k}(V') := \bigcup_{v \in V'}S_{k}(v) \quad \text{ and }\quad N_{k}(V') := \bigcup_{v \in V'}N_{k}(v).
$$
Finally, for $V', R \subseteq V$, let 
$$
	S_{k}^{R}(V') := S_{k}(V') \cap R \quad\text{ and }\quad N_{k}^{R}(V') := N_{k}(V') \cap R.
$$ 

The \textit{metric signature} of $v$ with respect to a set $R \subseteq V$ is the vector $\bm{s}_{R}(v) := (d(v,w))_{w \in R} \in \N_{0}^{|R|}$, where $\N_{0}$ is the set of non-negative integers. (We assume that the vertices of $R$ are ordered and that the indices of $\bm{s}_{R}(v)$ are indexed with respect to this ordering.) We say that $R$ is \textit{metric resolving} if $\bm{s}_{R}(v) \neq \bm{s}_{R}(w)$ for all $v \neq w$. The \textit{metric dimension} of $G$, denoted $\beta(G)$, is the size of a smallest metric resolving set for $G$. Note that $V$ itself is always metric resolving, so $\beta(G) \leq |V|$. In fact, it is easy to see that for any connected graph $G$, $\beta(G) \leq |V(G)|-1$ and the equality holds for $G = K_n$, the complete graph on $n$ vertices. We remark that the definition of a metric resolving set can be extended to cover graphs which are not connected by declaring that vertices in distinct components have distance $\infty$; with this definition, the metric dimension of $G$ is simply the sum of the metric dimensions of its components.

The study of the metric dimension of graphs dates to the 1970s, when it was first proposed by Slater~\cite{slater1975leaves} and Harary and Melter~\cite{harary1976metric}. Since then it has received significant attention from mathematicians and computer scientists, and the metric dimension is now known for a wide variety of graph families. We refer to the recent survey~\cite{tillquist2023getting} for a fairly comprehensive list of these families, as well as some interesting applications. We make particular note of previous works on the metric dimension of random graphs. In~\cite{bollobas2013metric}, Bollob\'{a}s, Mitsche, and Pra\l{}at investigated the metric dimension of $\Gnp$ when the average degree $d = (n-1)p$ is at least $\log^{5}n$; a recent paper of D\'{i}az, Hartle, and Moore treats the cases $\log n < d < \log^{5}n$~\cite{diaz2025metric}. (Note that $\Gnp$ is disconnected w.h.p.\footnote{An event holds with high probability (w.h.p.) if it holds with probability tending to one as $n \to \infty$.}\ for $d \leq (1-\epsilon)\log n$, where $\epsilon > 0$ is an arbitrarily small constant.) \'{O}dor and Thiran also consider a sequential version of metric dimension on $\Gnp$ in which a player attempts to locate a hidden vertex $v^{*}$ by making sequential queries of the distances $d(v,v^{*})$ for $v \in V$~\cite{odor2021sequential,odor2022role}. In \cite{dudek2022localization}, Dudek, English, Frieze, MacRury, and Pra\l{}at study a similar process on $\Gnp$ in which the hidden vertex $v^{*}$ is allowed  to move along an edge in each round. See also~\cite{dudek2019note} for weaker bounds in the regime in which $\Gnp$ has diameter two w.h.p. Lichev, Mitsche, and Pra\l{}at additionally study this process on random geometric graphs \cite{lichev2023localization}. See also~\cite{mitsche2015limiting} for the metric dimension of random trees and very-sparse $(d<1)$ random graphs and~\cite{tillquist2019multilateration} for related results on the stochastic block model.

Define the \textit{multiset signature} of $v$ with respect to $R \subseteq V$ as the vector 
$$
	\bm{m}_{R}(v) := (|S_{k}^{R}(v)|)_{k=0}^{\text{diam}(G)} \in \N_{0}^{\text{diam}(G) + 1},
$$
where $\text{diam}(G) = \max_{u, v \in V} d(u,v)$ is the diameter of graph $G$. Observe that $\bm{m}_{R}(v)$ is a representation of $\bm{s}_{R}(v)$ when $\bm{s}_{R}(v)$ is considered as a multiset rather than a vector. A set $R$ is \textit{multiset resolving} if $\bm{m}_{R}(v) \neq \bm{m}_{R}(w)$ for all $v \neq w$, and we define the multiset metric dimension $\beta_{\text{ms}}(G)$ to be the size of a smallest multiset resolving set for $G$ if such a set exists, and define $\beta_{\text{ms}}(G) = \infty$ otherwise. This parameter was introduced by Simanjuntak, Siagian, and Vitr\'{i}k in 2017~\cite{simanjuntak2017multiset}, and has since been studied by multiple authors---see, for instance,~\cite{alfarisi2020note},~\cite{bong2021some}, and~\cite{hakanen2024complexity}. As with metric resolving sets, the definition of a multiset resolving set can be extended to apply to graphs which are not connected.

A connected graph $G$ need not have finite multiset metric dimension; for instance, it is known that any non-path graph of diameter $2$ has no multiset resolving set~\cite[Theorem~3.1]{simanjuntak2017multiset}. To remediate this issue, Gil-Pons et.\ al.\ in~\cite{gil2019distance} define the \textit{outer multiset metric dimension} $\beta_{\text{ms}}^{\text{out}}(G)$ as the size of a smallest $R$ such that $\bm{m}_{R}(v) \neq \bm{m}_{R}(w)$ for all $v \neq w$ with $v,w \in V \setminus R.$ Trivially, any set $R$ of size $|V|-1$ is outer-multiset resolving, and hence always $\beta_{\text{ms}}^{\text{out}}(G) \leq |V|-1$; in fact, it is shown in~\cite{klavvzar2023further} that $\beta_{\text{ms}}^{\text{out}}(G) = |V| - 1$ if and only if $G$ is regular and has diameter at most $2$. Clearly we have $\beta_{\text{ms}}(G) \geq \beta_{\text{ms}}^{\text{out}}(G)$ for any graph $G$. On the other hand, we claim that $\beta_{\text{ms}}^{\text{out}}(G) \geq \beta(G)$. Indeed, if $R$ is outer-multiset resolving, then $\bm{s}_{R}(v) \neq \bm{s}_{R}(w)$ for all $v\neq w$ with $v,w \in V \setminus R$; but it is easy to see that this is sufficient to ensure that $\bm{s}_{R}(v) \neq \bm{s}_{R}(w)$ for \textit{all} $v \neq w$, so $R$ is also metric resolving. Hence,
$$
    \beta_{\text{ms}}(G) \geq \beta_{\text{ms}}^{\text{out}}(G) \geq \beta(G).
$$

Multiset resolving sets inherit some interesting applications from metric resolving sets. One example is source localization~\cite[Section 7.1]{tillquist2023getting}. Suppose that at time $t = 0$ a single (unknown) vertex $v_{0}$ of a connected graph $G$ is infected, and that any uninfected vertex which is adjacent to an infected vertex at time $t-1$ becomes infected at time $t$, for each $t =1,2,\dots$. We would like to identify the source vertex $v_{0}$ by observing the spread of the infection. One strategy for achieving this is to place a sensor at each vertex of some multiset resolving set $R$ for $G$, with each sensor emitting a signal at the first moment its vertex becomes infected. For $t = 0,1,2,\dots$, let $N_{t}$ be the number of signals emitted by vertices in $R$ at time $t$. (Note that $N_{t}$ is simply the number of vertices in $R$ at distance $t$ from $v_{0}$.) Then since $R$ is multiset-resolving for $G$, the identity of $v_{0}$ is completely determined by the sequence $\{N_{t}\}_{t=0}^{\text{diam}(G)}$. 

Metric resolving sets have also been shown to be useful as a tool for certain instances of \textit{graph embedding}, where the goal is map the vertices of $G$ to real $N$-dimensional space while preserving structural properties of the graph. In applications, it is preferable for $N$ to be small. For example, in~\cite{tillquist2019low} the authors use small resolving sets to find low-dimensional embeddings of graphical representations of genomic sequences, allowing for the application of a variety of clustering algorithms. The embedding is obtained by mapping each vertex to its metric signature with respect to some small metric resolving set; thus, given a metric resolving set $R$, the embedding maps the graph into $\R^{|R|}$. Importantly, for the application considered in~\cite{tillquist2019low}, the metric resolving set embedding is nearly an isometry, i.e., the Euclidean distance between the images of any pair of vertices is quite close to the graph distance between that pair. 

Multiset signatures with respect to a multiset resolving set also provide a natural graph embedding, and we propose that this embedding could prove useful for some applications. While $\beta_{\text{ms}}(G) \geq \beta(G)$ for any graph $G$, we observe that the dimension of the \textit{embedding} produced by a multiset resolving set for $G$ has dimension $\text{diam}(G) + 1$, as multiset signatures are vectors of this length. (Metric resolving set embeddings have dimension at least $\beta(G)$.) Our results demonstrate that for random graphs on $n$ vertices with average degree on the order of $n^{x}$ for $x > 0$ sufficiently small, a multiset resolving set exists w.h.p.; since the diameter of this graph is $O\left( \frac{1}{x} \right)$ w.h.p.\ (see \cite{bollobas1998random}), we get an embedding into $\R^{O\left(\frac{1}{x}\right)}$. On the other hand, results from \cite{bollobas2013metric} show that corresponding metric resolving set embedding has dimension which is logarithmic in $n$ for the ``best case" values of $x$ and polynomial in $n$ for the ``worst case." The extent to which either embedding preserves graph distances is an interesting question for future research.

\subsection{Definitions}
The binomial random graph $\Gnp$ is formally defined as a distribution over the class of graphs with the set of vertices $[n]:=\{1,\ldots,n\}$ in which every pair $\{i,j\} \in \binom{[n]}{2}$ appears independently as an edge in $G$ with probability~$p$. Note that $p=p(n)$ may (and usually does) tend to zero as $n$ tends to infinity. Most results in this area are asymptotic by nature. For more about this model see, for example,~\cite{bollobas1998random,frieze2016introduction,JLR}. 

We let $d = (n-1)p$, where $n$ is the number of vertices and $p$ is the edge probability of $\Gnp$. Given $d$, we reserve the index $i^{*}$ to denote the largest integer $i = i(n)$ such that $d^{i} = o(n)$ as $n \to \infty$. We let $c = c(n) = \frac{d^{i^{*}+1}}{n}$. (These parameters essentially determine the diameter of $\Gnp$.) To make our results simpler to state, we assume throughout that $d = n^{x+O(\log^{-1}n)} = \Theta( n^x )$ for some fixed $x \in (0,1)$, though we remark that the upper bound we obtain on $\beta_{\text{ms}}(\Gnp)$ can be extended to a more general setting. (See Section~\ref{sec:conclusion}.) With this simplifying assumption on $d$, we note that $i^{*} = \lfloor 1/x \rfloor$, unless $x = 1/k$ for some $k \geq 2$, in which case $i^{*} = k-1$. In particular, $i^{*}$ is a function of $x$ alone (and not $n$).

\subsection{Concentration tools}
Let $X \in \textrm{Bin}(n,p)$ be a random variable distributed according to a Binomial distribution with parameters $n$ and $p$. We will use the following consequences of \emph{Chernoff's bound} (see e.g.~\cite[Theorem~2.1]{JLR}): for any $t \ge 0$ we have
	\begin{eqnarray}
		\prob( X \ge \E X + t ) &\le& \exp \left( - \frac {t^2}{2 (\E X + t/3)} \right)  \label{chern1} \\
		\prob( X \le \E X - t ) &\le& \exp \left( - \frac {t^2}{2 \E X} \right).\label{chern2}
	\end{eqnarray}
Note that if $t \leq \frac{3 \E X}{2}$, the right-hand side of (\ref{chern1}) is at most $\exp \left( - \frac {t^2}{3 \E X } \right)$; since this also upper bounds (\ref{chern2}), we have the following corollary: for any $0 \leq t \leq \frac{3 \E X}{2}$, 

	\begin{equation}\label{chern3}
		\prob(|X - \E X| \geq t ) \leq 2 \exp \left(  - \frac {t^2}{3 \E X } \right).
	\end{equation}

\subsection{The function $f_{x}(y)$}
For $x \in (0,1]$, define the function $f_{x}$ on $[0,1]$ by
$$
	f_{x}(y) := \sum_{i=0}^{\left\lfloor \frac{1}{x} \right\rfloor}\max\{ix +y- 1,0\}.
$$
The function $f_{x}$ arises naturally in the study of the lower and upper bounds for $\beta_{\text{ms}}(\Gnp)$, and we motivate its appearance in the context of each bound at the beginnings of their respective sections (Sections \ref{sec:upper_bound} and \ref{sec:lower_bound}, respectively.) Here, we remark on some basic properties of $f_{x}$.

\begin{lemma}\label{lem:exponent_func1}
	For $x \in (0,1]$, the function $f_{x}$ is continuous, identically $0$ on $\left[0, 1-\left\lfloor\frac{1}{x}\right\rfloor x\right]$, and strictly increasing on $\left[1-\left\lfloor\frac{1}{x}\right\rfloor x, 1\right].$
\end{lemma}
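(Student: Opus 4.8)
The plan is to treat each summand $g_i(y) := \max\{ix + y - 1, 0\}$ separately and to exploit its piecewise-linear, monotone structure. Writing $m := \lfloor 1/x \rfloor$ for brevity, each $g_i$ is the maximum of the two continuous functions $y \mapsto ix + y - 1$ and $y \mapsto 0$, hence is continuous; since $f_{x} = \sum_{i=0}^{m} g_i$ is a finite sum of continuous functions, continuity of $f_{x}$ is immediate. This disposes of the first claim.

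For the second claim, I would first observe that the summand $g_i$ vanishes precisely when $y \leq 1 - ix$, and that these thresholds $1 - ix$ are strictly decreasing in $i$. The smallest threshold therefore occurs at $i = m$ and equals $1 - mx = 1 - \lfloor 1/x \rfloor x$, which is exactly the common endpoint of the two intervals in the statement (and is nonnegative, since $m = \lfloor 1/x \rfloor$ gives $mx \leq 1$). Concretely, for any $y \in [0, 1 - mx]$ and any $0 \leq i \leq m$ we have $ix + y - 1 \leq mx + y - 1 \leq 0$, so every summand is $0$ and hence $f_{x}(y) = 0$.

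For the third claim, I would note that each $g_i$ is non-decreasing in $y$ (as $ix + y - 1$ increases with $y$ and taking the maximum with $0$ preserves monotonicity), so $f_{x}$ is non-decreasing everywhere. To upgrade this to strict monotonicity on $[1 - mx, 1]$, I isolate the single summand $g_m$: for $y > 1 - mx$ we have $mx + y - 1 > 0$, so there $g_m(y) = mx + y - 1$ is strictly increasing, and a short case check (treating the left endpoint $y_1 = 1 - mx$, where $g_m(y_1) = 0$, separately) shows $g_m(y_2) > g_m(y_1)$ whenever $1 - mx \leq y_1 < y_2 \leq 1$. Combining $g_m(y_2) > g_m(y_1)$ with $g_i(y_2) \geq g_i(y_1)$ for the remaining indices yields $f_{x}(y_2) > f_{x}(y_1)$, as required.

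There is no serious obstacle here; the only point requiring care is the bookkeeping in the second step, namely recognizing that $i = m$ is the index whose threshold $1 - ix$ coincides with the common endpoint $1 - \lfloor 1/x \rfloor x$, so that the vanishing region and the strictly increasing region meet exactly at that point. Everything else follows from the elementary fact that a finite sum of non-decreasing functions, at least one of which is strictly increasing on the relevant interval, is itself strictly increasing there.
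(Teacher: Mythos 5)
Your proof is correct and is exactly the elementary argument the paper has in mind: the paper omits its own proof, stating that the lemma ``follows immediately from the definition,'' and your term-by-term analysis of the summands $\max\{ix+y-1,0\}$ --- continuity as a finite sum, vanishing of all terms for $y \le 1-\lfloor 1/x\rfloor x$, and strict increase driven by the $i=\lfloor 1/x\rfloor$ term --- is precisely that intended verification. No gaps.
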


The proof of Lemma \ref{lem:exponent_func1} is elementary and follows immediately from the definition of the function $f_{x}(y)$, and so we omit it.

\begin{lemma}\label{lem:exponent_func2}
	Let $x \in (0,1]$ and $y \in [0,1)$ be such that $f_{x}(y) > 0$ (or, equivalently, that $1-\left\lfloor\frac{1}{x}\right\rfloor x < y < 1$). Then for any $\epsilon \in [0,1-y]$, we have
	
	$$
		f_{x}(y+\epsilon) - f_{x}(y) \geq \epsilon.
	$$
\end{lemma}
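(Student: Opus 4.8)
The plan is to exploit the additive, term-by-term structure of $f_{x}$ and to isolate a single summand that already ``contributes slope $1$'' at the point $y$. Write $m = \lfloor 1/x \rfloor$ and, for $0 \le i \le m$, set $g_{i}(y) := \max\{ix + y - 1, 0\}$, so that $f_{x} = \sum_{i=0}^{m} g_{i}$. Each $g_{i}$ is the maximum of the non-decreasing affine map $y \mapsto ix + y - 1$ and the constant $0$, hence is itself non-decreasing in $y$; consequently every difference $g_{i}(y+\epsilon) - g_{i}(y)$ is non-negative for $\epsilon \ge 0$. The whole argument then reduces to showing that at least one of these differences equals exactly $\epsilon$.

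First I would identify the right summand. Since the quantity $ix + y - 1$ is strictly increasing in $i$, the top-index term $g_{m}$ is the largest of the $g_{i}$, so the hypothesis $f_{x}(y) > 0$ forces $g_{m}(y) = mx + y - 1 > 0$ (equivalently $y > 1 - mx$, which is exactly the parenthetical reformulation of $f_{x}(y) > 0$ afforded by Lemma~\ref{lem:exponent_func1}). Next I would check that $g_{m}$ stays active after the perturbation: for $\epsilon \in [0, 1-y]$ we have $mx + (y+\epsilon) - 1 \ge mx + y - 1 > 0$, so $g_{m}$ is linear with slope $1$ across the whole interval $[y, y+\epsilon]$ and therefore $g_{m}(y+\epsilon) - g_{m}(y) = \epsilon$.

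Putting the two observations together gives
$$
    f_{x}(y+\epsilon) - f_{x}(y) = \sum_{i=0}^{m} \bigl( g_{i}(y+\epsilon) - g_{i}(y) \bigr) \ge g_{m}(y+\epsilon) - g_{m}(y) = \epsilon,
$$
where the inequality simply discards the non-negative contributions of the terms $i \ne m$. There is essentially no hard step here: the only point requiring care is the verification that an active term exists at $y$ and remains active throughout $[y, y+\epsilon]$, and both facts follow immediately from monotonicity of $ix + y - 1$ in $i$ and in $y$ respectively. One could equivalently argue via the derivative $f_{x}'(t) = |\{i : ix + t - 1 > 0\}| \ge 1$ for $t \ge y$ and integrate, but the term-isolation argument sidesteps the kinks of the piecewise-linear $f_{x}$ and so is the cleaner route.
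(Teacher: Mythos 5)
Your argument is correct and is essentially the paper's own proof: both isolate the top-index summand $\max\{\lfloor 1/x\rfloor x + y - 1, 0\}$, note that $f_{x}(y) > 0$ forces it to be positive (hence it increases by exactly $\epsilon$), and observe that the remaining summands are non-decreasing. The only difference is that you spell out the monotonicity-in-$i$ justification and the persistence of the active term over $[y, y+\epsilon]$, which the paper leaves implicit.
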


\begin{proof} 
	Since we assume that $f_{x}(y) > 0$, it must be the case that the largest summand in $f_{x}(y)$---namely, $\max\left\{\left\lfloor\frac{1}{x}\right\rfloor x + y - 1,0\right\}$---is positive. Increasing $y$ by $\epsilon$ increases this summand by $\epsilon$; none of the other summands decrease after the change in $y$, so the result follows.
\end{proof}

\begin{lemma}\label{lem:y1_and_y4}
    If $x \in (0,1/8]$, then there is a unique solution $y_{4} = y_{4}(x) \in (0,1)$ to $f_{x}(y) = 4.$ Similarly, if $x \in (0,1/2]$, then there is a unique solution $y_{1} = y_{1}(x) \in (0,1)$ to $f_{x}(y) = 1.$
\end{lemma}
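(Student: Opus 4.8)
The plan is to prove the existence and uniqueness of $y_4$ and $y_1$ by combining the monotonicity and continuity properties already established for $f_x$ in Lemmas~\ref{lem:exponent_func1} and~\ref{lem:exponent_func2} with the Intermediate Value Theorem. Since both claims have identical structure (solving $f_x(y) = c$ for a target value $c$), I would handle them in parallel, isolating the general principle: \emph{if $f_x$ is continuous and strictly increasing on the interval $\left[1 - \left\lfloor \frac{1}{x}\right\rfloor x, 1\right]$ with $f_x\left(1 - \left\lfloor \frac{1}{x}\right\rfloor x\right) = 0$, then for any value $c$ in the open interval $\left(0, f_x(1)\right)$ there is a unique $y \in (0,1)$ with $f_x(y) = c$.} Uniqueness is immediate from strict monotonicity (Lemma~\ref{lem:exponent_func2} guarantees $f_x$ is strictly increasing once it becomes positive), and existence follows from the IVT applied on $\left[1 - \left\lfloor\frac{1}{x}\right\rfloor x, 1\right]$, using continuity from Lemma~\ref{lem:exponent_func1}.

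The crux of the argument therefore reduces to verifying the range condition: I must show that $f_x(1) > 4$ when $x \in (0,1/8]$, and $f_x(1) > 1$ when $x \in (0,1/2]$, so that the target values $4$ and $1$ respectively lie strictly inside $(0, f_x(1))$. To compute $f_x(1)$, I would substitute $y = 1$ into the definition, giving
$$
    f_x(1) = \sum_{i=0}^{\left\lfloor \frac{1}{x}\right\rfloor} \max\{ix, 0\} = \sum_{i=0}^{\left\lfloor\frac{1}{x}\right\rfloor} ix = x\binom{\left\lfloor\frac{1}{x}\right\rfloor + 1}{2} = \frac{x}{2}\left\lfloor\frac{1}{x}\right\rfloor\left(\left\lfloor\frac{1}{x}\right\rfloor + 1\right).
$$
For $x \leq 1/8$ we have $\left\lfloor\frac{1}{x}\right\rfloor \geq 8$, and a direct estimate of the right-hand side shows $f_x(1)$ comfortably exceeds $4$; similarly, for $x \leq 1/2$ we have $\left\lfloor\frac{1}{x}\right\rfloor \geq 2$, which yields $f_x(1) \geq x \geq \ldots$, and one checks $f_x(1) > 1$ across the whole range. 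The one subtlety is that $f_x(1)$ is not monotone in $x$ in an obvious way, so I would want to bound it below uniformly over the relevant interval rather than just at the endpoint; writing $m = \left\lfloor\frac{1}{x}\right\rfloor$ and noting $x > \frac{1}{m+1}$ gives $f_x(1) > \frac{m(m+1)}{2(m+1)} = \frac{m}{2}$, which is at least $4$ when $m \geq 8$ and at least $1$ when $m \geq 2$.

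I expect the main obstacle to be a boundary/edge-case check rather than any deep difficulty: one must confirm that the threshold values $x = 1/8$ and $x = 1/2$ are handled correctly (in particular that the inequalities $f_x(1) > 4$ and $f_x(1) > 1$ are strict, ensuring the solutions $y_4, y_1$ lie strictly in $(0,1)$ and not at the right endpoint), and that the target value genuinely lies in the open range where $f_x$ is strictly increasing, so that $y_4, y_1 > 1 - \left\lfloor\frac{1}{x}\right\rfloor x \geq 0$. The lower-bound estimate $f_x(1) > \frac{m}{2}$ from the previous paragraph resolves this cleanly, since $\frac{m}{2} \geq 4$ and $\frac{m}{2} \geq 1$ are strict once $m \geq 8$ and $m \geq 2$ respectively hold with room to spare. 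With the range condition verified, the IVT-plus-monotonicity skeleton closes the proof without further computation.
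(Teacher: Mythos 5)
Your proposal is correct and matches the paper's own proof essentially verbatim: both reduce the claim to checking $f_x(1) > 4$ (resp.\ $> 1$) via continuity and strict monotonicity from Lemma~\ref{lem:exponent_func1}, compute $f_x(1) = \frac{x}{2}\left\lfloor\frac{1}{x}\right\rfloor\left(\left\lfloor\frac{1}{x}\right\rfloor+1\right)$, and use $\left(\left\lfloor\frac{1}{x}\right\rfloor+1\right)x > 1$ to bound this below by $\frac{1}{2}\left\lfloor\frac{1}{x}\right\rfloor$. No issues.
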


\begin{proof}
    We observe that, since the function $f_{x}$ is continuous, identically $0$ on $\left[0, 1-\left\lfloor\frac{1}{x}\right\rfloor x \right]$, and strictly increasing on $\left[1-\left\lfloor\frac{1}{x}\right\rfloor x, 1\right]$ by Lemma \ref{lem:exponent_func1}, as long as $f_{x}(1) > 4$, there is necessarily a unique solution to $f_{x}(y) = 4$ in $(0,1)$. We have
	$$
		f_{x}(1) = \sum_{i=0}^{\lfloor1/x\rfloor}ix = \frac{\left(\left\lfloor\frac{1}{x}\right\rfloor + 1\right)\left\lfloor\frac{1}{x}\right\rfloor \cdot x}{2}.
	$$
If $x \leq \frac{1}{8}$, then
	$$
		 \frac{\left(\left\lfloor\frac{1}{x}\right\rfloor + 1\right)\left\lfloor\frac{1}{x}\right\rfloor \cdot x}{2} > \frac{\left\lfloor\frac{1}{x}\right\rfloor}{2} \geq \frac{8}{2} = 4
	$$
where we use $(\lfloor1/x\rfloor + 1)x > 1$ for $x > 0$. 

The existence of a unique solution $y_{1}$ to $f_{x}(y) = 1$ for $x \in (0,1/2]$ can be established using the same argument, since $f_x(1) > 1$ for any $x \le 1/2$.
\end{proof}

\subsection{Main result}

Our main result is the following.

\begin{theorem}\label{thm:main_result}
	If $x \in \left(0, \frac{1}{8}\right]$ and $d = (n-1)p = n^{x + O(\log^{-1}n)}$, then w.h.p.
		$$
			\beta_{\textup{ms}}(\Gnp) \leq n^{y_{4} + O(\log^{-1}n)},
		$$
	where $y_{4} = y_{4}(x) \in (0,1)$ is the unique solution to $f_{x}(y) = 4$. On the other hand, if $x \in \left(0, \frac{1}{2} \right]$ and $d = n^{x + O(\log^{-1}n)}$, then w.h.p.
		$$
			\beta_{\textup{ms}}(\Gnp) \geq n^{y_{1} + O(\log^{-1}n)},
		$$
	where $y_{1} = y_{1}(x)$ is the unique solution to $f_{x}(y) = 1$. Finally, if $x > \frac{1}{2}$, then w.h.p. 
		$$
			\beta_{\textup{ms}}(\Gnp) = \infty.
		$$
\end{theorem}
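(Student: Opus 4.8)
The plan is to reduce this case entirely to the known structural obstruction cited in the introduction: by~\cite[Theorem~3.1]{simanjuntak2017multiset}, any non-path graph of diameter $2$ admits no multiset resolving set, and hence has $\beta_{\textup{ms}} = \infty$. So it suffices to show that for fixed $x > \tfrac12$ the graph $\Gnp$ is, with high probability, a non-path graph of diameter exactly $2$. The key observation is that $x > \tfrac12$ is precisely the regime in which $c = c(n) = d^{i^*+1}/n \to \infty$: for $x \in (\tfrac12,1)$ we have $1 < 1/x < 2$, so $i^* = 1$, and since $d = \Theta(n^x)$ we get $c = d^2/n = \Theta(n^{2x-1}) \to \infty$ because $2x-1 > 0$. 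It is this divergence of $c$ that forces the diameter down to $2$.

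For the upper bound on the diameter, I would fix a pair $\{u,v\} \in \binom{[n]}{2}$ and note that $d(u,v) \geq 3$ requires both that $uv$ is a non-edge and that $u,v$ have no common neighbour among the remaining $n-2$ vertices; these events are independent, so $\prob(d(u,v) \geq 3) \leq (1-p)(1-p^2)^{n-2} \leq \exp\bigl(-(n-2)p^2\bigr)$. Since $p = \Theta(n^{x-1})$, we have $(n-2)p^2 = \Theta(n^{2x-1})$, which tends to infinity polynomially in $n$ by the assumption $x > \tfrac12$. A union bound over all $\binom{n}{2}$ pairs then gives $\prob(\mathrm{diam}(\Gnp) > 2) \leq \binom{n}{2}\exp\bigl(-\Theta(n^{2x-1})\bigr) \to 0$, the exponential factor dominating the polynomial one. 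In particular $\Gnp$ is connected w.h.p., so its diameter is well defined. To rule out diameter $1$, I would observe that $x < 1$ gives $p = \Theta(n^{x-1}) \to 0$, so the expected number of non-edges $\binom{n}{2}(1-p)$ diverges and $\Gnp$ is not complete w.h.p.; together with the union bound this yields $\mathrm{diam}(\Gnp) = 2$ w.h.p. Finally, any graph on $n \geq 4$ vertices of diameter $2$ is automatically not a path, since $P_n$ has diameter $n-1 \geq 3$, so the hypotheses of~\cite[Theorem~3.1]{simanjuntak2017multiset} are met and the conclusion $\beta_{\textup{ms}}(\Gnp) = \infty$ follows.

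In this case I do not expect a serious obstacle: the whole argument is a first-moment union bound requiring no concentration machinery beyond the elementary inequality $1-t \leq e^{-t}$, and the genuine content is simply the identification of $x > \tfrac12$ with the diameter-$2$ regime, after which the cited combinatorial theorem does all the work. The only point needing a little care is verifying that $\Gnp$ is neither complete nor a path w.h.p., so that the diameter is exactly $2$ and the ``non-path'' hypothesis of~\cite[Theorem~3.1]{simanjuntak2017multiset} genuinely applies; both checks are immediate from $p \to 0$ and $n \to \infty$.
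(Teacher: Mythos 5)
Your argument for the third assertion is correct and follows essentially the same route as the paper: identify $x>\tfrac12$ as the diameter-$2$ regime and invoke \cite[Theorem~3.1]{simanjuntak2017multiset}. The paper simply cites \cite{bollobas1998random} for the diameter statement, whereas you give a short self-contained first-moment bound $\prob(d(u,v)\geq 3)\leq (1-p)(1-p^2)^{n-2}\leq e^{-(n-2)p^2}$ plus a union bound; that is fine, and your care in checking that the graph is neither complete nor a path is appropriate.

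However, the statement you were asked to prove has three assertions, and you have proved only the easiest one. The upper bound $\beta_{\textup{ms}}(\Gnp)\leq n^{y_4+O(\log^{-1}n)}$ for $x\in(0,\tfrac18]$ and the lower bound $\beta_{\textup{ms}}(\Gnp)\geq n^{y_1+O(\log^{-1}n)}$ for $x\in(0,\tfrac12]$ are the substantive content of the theorem, and neither is addressed at all. Neither follows from a cited black box. The upper bound requires a probabilistic-method argument: one takes $R$ random with inclusion probability $r/n$ where $r=n^{y_4+\epsilon}$, reveals $R$ in stages over the shells $N_i(\{v,w\})\setminus N_{i-1}(\{v,w\})$, and at each stage $i\leq i^*$ bounds the probability that $|S_i^R(v)|=|S_i^R(w)|$ by the maximum of the $\mathrm{Bin}(s_i,r/n)$ mass function, $O(n^{-\frac12\max\{ix+y-1,0\}})$, using the expansion estimate $s_i=(1+o(1))d^i$; multiplying over $i$ gives $n^{-\frac12 f_x(y)+O(\log^{-1}n)}\leq \tfrac{1}{2n^2}$ and a union bound finishes (with a separate treatment of level $i^*+1$ when $x=1/k$). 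The lower bound requires a counting argument: w.h.p.\ at least $(1+o(1))n/2$ vertices are ``typical'' in that $|N_i^R(v)|\leq\max\{2(k+1)|N_i(v)|r/n,1\}$ for all $i$, so the number of available signatures for typical vertices is at most $n^{f_x(y)+O(\log^{-1}n)}<0.49n$ when $y\leq y_1-C/\log n$, forcing a collision by pigeonhole; this also uses the diameter of $\Gnp$ to truncate the signature at coordinate $k$. Without these two arguments (or substitutes for them), the proof is incomplete in an essential way.
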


We remark that \cref{thm:main_result} is equivalent to saying that if $d = \Theta(n^{x})$ for some fixed $x \in (0,1/8],$ then $\beta_{\text{ms}}(\Gnp) = O(n^{y_{4}(x)})$ w.h.p., and if $d = \Theta(n^{x})$ for fixed $x \in (0,1/2]$, then $\beta_{\text{ms}}(\Gnp) = \Omega(n^{y_{1}(x)})$ w.h.p. (Recall from \cref{lem:y1_and_y4} that the equation $f_{x}(y) = 4$ has a unique solution if $x \in (0, 1/8]$, and $f_{x}(y) = 1$ has a unique solution if $x \in (0,1/2].$) The gap between the lower and upper bounds is still quite large; indeed, for the range of $x$ for which both  bounds apply, the ratio of the upper bound to the lower goes to infinity polynomially in $n$. It is also still uncertain whether $\beta_{\text{ms}}(\Gnp)$ is even finite w.h.p.\ for $x \in \left(\frac{1}{8},\frac{1}{2} \right]$. The result for $x > \frac{1}{2}$ follows from the fact that $\beta_{\text{ms}}(G) = \infty$ for any non-path graph $G$ of diameter at most $2$~\cite[Theorem 3.1]{simanjuntak2017multiset}, and the fact that $\Gnp$ has diameter $2$ w.h.p.\ in the regime $d = n^{x}$ for  $\frac{1}{2} < x < 1$~\cite{bollobas1998random}. 

In Figure~\ref{fig:y4y1} we plot the functions $y_{1}(x)$ and $y_{4}(x)$. Each curve exhibits the type of ``zig-zag" behaviour also observed for the metric dimension of $\Gnp$ in~\cite{bollobas1998random}, with jumps occurring at reciprocals of integers $\frac{1}{k}$ for $k \geq 1.$ Note that these are precisely the points at which the diameter of $\Gnp$ decreases from $k+1$ to $k$. While obtaining a general formula for $y_{1}(1/k)$ and $y_{4}(1/k)$ appears difficult, with an ad hoc approach they are manageable to compute for specific values of $k$. For instance, we found that $y_{1}(1/2) = 3/4$, $y_{1}(1/3) = 2/3$, $y_{1}(1/4) = 7/12$, and so on.

\begin{figure}[h]
	\centering
		\includegraphics[scale=0.75]{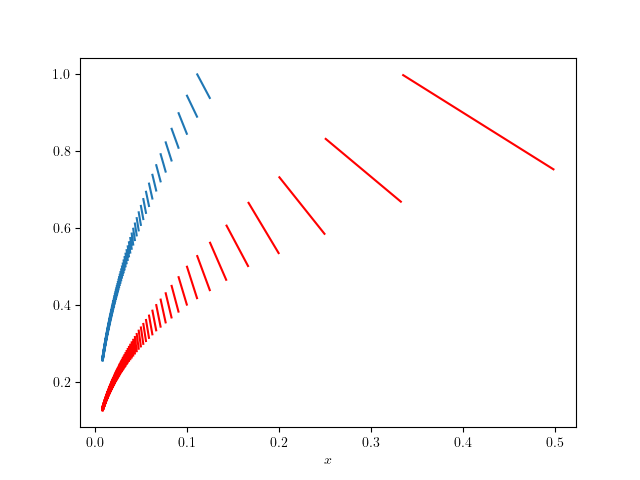}
	\caption{Plots of the points $(x,y_{4})$ satisfying $f_{x}(y_{4}) = 4$ (blue) for $0 < x \leq \frac{1}{8}$ and $(x, y_{1})$ satisfying $f_{x}(y_{1}) = 1$ (red) for $0 < x \leq \frac{1}{2}$.}\label{fig:y4y1}
\end{figure}

\section{Expansion properties}

Here we outline some useful expansion properties of $\Gnp$ which hold w.h.p.\ in the regime we are interested in. We are particularly concerned with the sizes of the shells $S_{i}(v)$ and $S_i( \{u,v\} )$ for vertices $u$, $v$, and distances $i \geq 0$. Let $d = (n-1)p$ satisfy $\omega(\log n) = d = o(n)$. Recall that we let $i = i^{*}(n)$ be the largest integer $i$ such that $d^{i} = o(n)$ and $c = c(n) = \frac{d^{i^{*}+1}}{n}.$ Define $\gamma = \max\left\{\sqrt{\frac{\log n}{d}}, \frac{d^{i^{*}}}{n}\right\} =o(1)$. For $V' \subseteq V$ and $i  \in \{ 0,1,2,\dots,i^{*} \} $, define the events
$$
	\mathcal{V}_{i}(V') = \left\{ \, |S_{i}(V')| = (1+O(\gamma))|V'|d^{i} \, \right\}.
$$
and 
$$
	\mathcal{V}_{i^{*}+1}(V') = \left\{\,|S_{i}(V')| = \left(1 - e^{-|V'|c} - \frac{|V'|d^{i^{*}}}{n} + O\left(\gamma + \frac{\log n}{\sqrt{n}} \right) \right)n  \,\right\}.
$$
Let

$$
	\mathcal{V} := \bigcap_{V':|V|' \in \{1,2\}}\bigcap_{i=0}^{i^{*}+1} \mathcal{V}_{i}(V').
$$
All of our results rely on the following lemma. We note that while our main results on the multiset metric dimension only apply to random graphs with average degree on the order of $n^{x}$ for fixed $x >0$, the lemma establishes typical expansion properties of random graphs down to degree $d = \omega(\log n)$.
\begin{lemma}\label{lem:typical_expansion}
	Suppose $d = (n-1)p$ satisfies $d = \omega(\log n)$ and $d = o(n)$. Then the event $\mathcal{V}$ holds w.h.p.\ in $\Gnp.$ 
\end{lemma}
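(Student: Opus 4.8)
The plan is to prove the statement by a breadth-first exploration of $\Gnp$ from $V'$, controlling each shell $|S_i(V')|$ by conditioning on the already-revealed layers and applying the Chernoff bounds (\ref{chern1})--(\ref{chern3}), and then taking a union bound over all $O(n^2)$ choices of $V'$ with $|V'| \in \{1,2\}$ and the $O(1)$ relevant radii $i$. Fix such a $V'$ and explore layer by layer: having revealed $S_0(V') = V', S_1(V'), \dots, S_{i-1}(V')$ together with all edges incident to $N_{i-1}(V')$, the edges between $S_{i-1}(V')$ and the unexplored set $V \setminus N_{i-1}(V')$ are still unexamined and mutually independent. Hence, conditionally on this history $\mathcal{F}_{i-1}$, each of the $n - |N_{i-1}(V')|$ unexplored vertices lands in $S_i(V')$ independently with probability $q_{i-1} := 1 - (1-p)^{|S_{i-1}(V')|}$, so that $|S_i(V')| \sim \mathrm{Bin}(n - |N_{i-1}(V')|, q_{i-1})$ given $\mathcal{F}_{i-1}$. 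This structural fact is what everything rests on.

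For the radii $1 \le i \le i^*$ I would argue by induction. The base case $|S_0(V')| = |V'|$ is exact. For the inductive step, I work on the ``good history'' event $\mathcal{G}_{i-1} := \bigcap_{j < i} \mathcal{V}_j(V')$, on which $|S_{i-1}(V')| = (1 + O(\gamma))|V'| d^{i-1}$; summing the geometric series (dominated by its top term, using $1/d = o(\gamma)$) also gives $|N_{i-1}(V')| = (1 + O(\gamma))|V'|d^{i-1} = o(\gamma n)$. Since $|S_{i-1}(V')| p = O(|V'| d^i / n) = O(\gamma)$ for $i \le i^*$, one has $q_{i-1} = |S_{i-1}(V')|\,p\,(1 + O(\gamma))$, and the conditional mean becomes $\mu := (n - |N_{i-1}(V')|)q_{i-1} = (1 + O(\gamma))|V'|d^i$. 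Applying (\ref{chern3}) with deviation $t = C\gamma\mu$ gives failure probability at most $2\exp(-C^2 \gamma^2 \mu/3)$, and since $\gamma^2 \mu \ge (\log n/d)\,|V'| d^i = |V'| d^{i-1}\log n \ge \log n$ (the radius $i=1$ being the binding case), choosing the constant $C$ hidden in $O(\gamma)$ large enough makes this $o(n^{-2})$. Thus $\mathcal{V}_i(V')$ holds on $\mathcal{G}_{i-1}$ with the required probability, and because there are only $i^* + 1 = O(1)$ layers the accumulated multiplicative errors $(1 + O(\gamma))^{i^*}$ remain $1 + O(\gamma)$.

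The final shell $i = i^* + 1$ is handled separately and is where the real work lies. Now $|S_{i^*}(V')|\,p = (1 + O(\gamma))|V'| c = \Theta(c)$ need not be small, so the linearization of $q_{i^*}$ fails; instead I would write $q_{i^*} = 1 - (1-p)^{|S_{i^*}(V')|} = 1 - e^{-|V'| c} + O(\gamma)$, which requires checking that the error coming from the $\gamma$-fluctuations of $|S_{i^*}(V')|$ and from $\log(1-p) = -p + O(p^2)$ collapses into $O(\gamma)$ even when $c$ is large (here one uses $c/d \le \gamma$, so that $e^{-|V'|c}$ times any polynomial factor is $o(\gamma)$ once $c \to \infty$). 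Combined with $n - |N_{i^*}(V')| = n(1 - |V'| d^{i^*}/n + o(\gamma))$, the conditional mean equals $(1 - e^{-|V'|c} - |V'| d^{i^*}/n + O(\gamma))n$, matching the definition of $\mathcal{V}_{i^*+1}(V')$. Since this mean is $\Theta(n)$, (\ref{chern3}) with $t = \Theta(\sqrt{n \log n}) = O((\log n/\sqrt n)\, n)$ controls the fluctuation with failure probability $o(n^{-2})$, which is the source of the extra $\log n/\sqrt n$ term. Finally I would assemble everything: for each $V'$, a first-failure decomposition gives $\prob(\mathcal{V}(V')^c) \le \sum_{i} \prob(\mathcal{V}_i(V')^c \cap \mathcal{G}_{i-1})$ with each summand $o(n^{-2})$, and a union bound over the $O(n^2)$ sets $V'$ yields $\prob(\mathcal{V}^c) = o(1)$. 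I expect the main obstacle to be precisely this last-shell estimate, together with the bookkeeping needed to confirm that all accumulated corrections---the multiplicative $(1+O(\gamma))$ factors, the $(1-p)^s$ versus $e^{-ps}$ discrepancy, and the Chernoff fluctuations---are genuinely absorbed into the claimed $O(\gamma + \log n/\sqrt n)$ error, uniformly over all $V'$ and all regimes of $c$.
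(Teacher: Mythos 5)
The paper does not actually prove \cref{lem:typical_expansion}; it defers to \cite{bollobas2013metric} and to \cite[Lemmas 5.1 and 5.3]{odor2021sequential}. Your layer-by-layer BFS exploration --- revealing shells so that, conditionally on the history, $|S_i(V')|$ is exactly $\mathrm{Bin}\bigl(n-|N_{i-1}(V')|,\,1-(1-p)^{|S_{i-1}(V')|}\bigr)$, then applying \eqref{chern3} and a union bound over the $O(n^2)$ sets $V'$ --- is precisely the strategy of those cited proofs, and your treatment of the last shell (the non-linearizable $q_{i^*}$, the $\Theta(n)$ mean, and the resulting $\log n/\sqrt{n}$ error term) identifies the right issues and resolves them correctly, including the check that $c\,e^{-c}=O(1)$ absorbs the fluctuation of the exponent when $c\to\infty$.

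There is one genuine gap, in the step where you take deviation $t=C\gamma\mu$ at \emph{every} level and then dismiss the accumulated factor $(1+O(\gamma))^{i^*}$ on the grounds that ``$i^*+1=O(1)$.'' That is true in the regime $d=n^{x+O(\log^{-1}n)}$ used for the paper's main theorems, but the lemma is stated (and is later invoked in the conclusion) for all $d=\omega(\log n)$, where $i^*$ can be as large as $\Theta(\log n/\log\log n)$ (e.g.\ $d=\log^2 n$); there $i^*\gamma\to\infty$ is possible and the error no longer stays within a single uniform $O(\gamma)$ window. The standard repair, which is what the cited proofs do, is to take a level-dependent deviation $t_i=\Theta\bigl(\sqrt{\mu_i\log n}\bigr)$, so that the relative error at level $i$ is $O\bigl(\sqrt{\log n/(|V'|d^i)}\bigr)$ and the total accumulated relative error is controlled by the geometric series $\sum_{i\ge 1}\sqrt{\log n/d^i}=(1+o(1))\sqrt{\log n/d}\le(1+o(1))\gamma$, independently of how large $i^*$ is. With that single modification your argument goes through in the full range $d=\omega(\log n)$, $d=o(n)$.
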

A version of \cref{lem:typical_expansion} was originally shown to hold for $(\log n)^{5}(\log\log n)^{2} \leq d = o(n)$ in \cite{bollobas2013metric}; it was subsequently shown in \cite{odor2021sequential} to hold with the hypothesis $\omega(\log n) = d = o(n)$. We refer the reader to those sources for the proof. In particular, for levels $i=0,1,\dots,i^{*}$ see \cite[Lemma 5.1]{odor2021sequential}, and for level $i = i^{*}+1$ see \cite[Lemma 5.3]{odor2021sequential} (a mild extension is required for the latter case). We summarize some useful consequences of \cref{lem:typical_expansion} in the following corollary, which is easily deduced from the lemma. 

\begin{corollary}\label{cor:expansion_cor}
Let $d = (n-1)p$ satisfy $d = \omega(\log n)$ and $d = o(n)$. Then the following hold w.h.p.\ for $\Gnp$: 
	\begin{enumerate}[label = (\roman*)]
		\item For any vertex $v$ and any $i \in \{ 0,1,\dots,i^{*} \} $, $|N_{i}(v)| = (1+o(1))d^{i}$.
		\item For any pair of vertices $v$ and $w$ and any $i  \in \{ 0,1,\dots,i^{*} \} $, $|S_{i}(w) \setminus N_{i}(v)| = (1+o(1))d^{i}$.
		\item If $c = \Theta(1)$, for any vertex $v$, $|N_{i^{*}+1}(v)| = (1+o(1))(1-e^{-c})n$.
		\item If $c = \Theta(1)$, for any pair of vertices $v$ and $w$, $|S_{i^{*}+1}(v) \setminus N_{i^{*}}(w)| = (1+o(1))(1-e^{-c})n$ and $|S_{i^{*}+1}(w) \setminus N_{i^{*}+1}(v)| = (1+o(1))e^{-c}(1-e^{-c})n$.
	\end{enumerate}
\end{corollary}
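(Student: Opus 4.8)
The plan is to argue deterministically on the event $\mathcal{V}$ of \cref{lem:typical_expansion}, which holds w.h.p., deducing each estimate from the sphere-size bounds that $\mathcal{V}$ supplies for singletons $V' = \{v\}$ and pairs $V' = \{v,w\}$ by elementary set arithmetic. I will use throughout that $\gamma = o(1)$ and that, since $d \to \infty$, the geometric sum $\sum_{j=0}^{i} d^{j} = \frac{d^{i+1}-1}{d-1} = (1+O(1/d))\,d^{i}$ is dominated by its top term uniformly in $i$. For (i), writing $N_{i}(v) = \bigcup_{j=0}^{i} S_{j}(v)$ as a disjoint union gives $|N_{i}(v)| = \sum_{j=0}^{i}(1+O(\gamma))d^{j} = (1+o(1))d^{i}$ on $\mathcal{V}$. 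Part (iii) is the same argument one level up: $N_{i^{*}+1}(v) = N_{i^{*}}(v) \cup S_{i^{*}+1}(v)$ is disjoint with $|N_{i^{*}}(v)| = (1+o(1))d^{i^{*}} = o(n)$ by (i), while the top-level clause of $\mathcal{V}$ gives $|S_{i^{*}+1}(v)| = (1 - e^{-c} + o(1))n$ (using $\tfrac{d^{i^{*}}}{n} \le \gamma = o(1)$); since $c = \Theta(1)$ makes $1 - e^{-c} = \Theta(1)$, the additive $o(n)$ error becomes a $(1+o(1))$ factor and $|N_{i^{*}+1}(v)| = (1+o(1))(1-e^{-c})n$.

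The one slightly less immediate ingredient is controlling intersections of spheres around two distinct vertices, which is exactly where the pair estimates in $\mathcal{V}$ are needed. Since $S_{i}(\{v,w\}) = S_{i}(v) \cup S_{i}(w)$, inclusion--exclusion on $\mathcal{V}$ gives, for $i \le i^{*}$,
$$
	|S_{i}(v) \cap S_{i}(w)| = |S_{i}(v)| + |S_{i}(w)| - |S_{i}(\{v,w\})| = O(\gamma)\,d^{i} = o(d^{i}),
$$
and at the top level, where $|S_{i^{*}+1}(\{v,w\})| = (1 - e^{-2c} + o(1))n$,
$$
	|S_{i^{*}+1}(v) \cap S_{i^{*}+1}(w)| = 2(1-e^{-c})n - (1-e^{-2c})n + o(n) = \left((1-e^{-c})^{2} + o(1)\right)n.
$$

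With these in hand, (ii) follows from $|S_{i}(w) \cap N_{i}(v)| \le \sum_{j=0}^{i-1}|S_{j}(v)| + |S_{i}(v)\cap S_{i}(w)| = O(d^{i-1}) + o(d^{i}) = o(d^{i})$, so that $|S_{i}(w) \setminus N_{i}(v)| = |S_{i}(w)| - o(d^{i}) = (1+o(1))d^{i}$. The first estimate of (iv) is identical in spirit: $|S_{i^{*}+1}(v) \setminus N_{i^{*}}(w)| = |S_{i^{*}+1}(v)| - O(|N_{i^{*}}(w)|) = (1-e^{-c}+o(1))n - o(n) = (1+o(1))(1-e^{-c})n$.

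The main computation is the second estimate of (iv). Writing $S_{i^{*}+1}(w) \setminus N_{i^{*}+1}(v) = \big(S_{i^{*}+1}(w) \setminus S_{i^{*}+1}(v)\big) \setminus N_{i^{*}}(v)$ and absorbing the $|N_{i^{*}}(v)| = o(n)$ correction reduces the problem to $|S_{i^{*}+1}(w)| - |S_{i^{*}+1}(v) \cap S_{i^{*}+1}(w)|$, and substituting the two displays above gives
$$
	(1-e^{-c})n - (1-e^{-c})^{2}n + o(n) = \left(e^{-c}(1-e^{-c}) + o(1)\right)n = (1+o(1))\,e^{-c}(1-e^{-c})\,n,
$$
the last step again using $c = \Theta(1)$. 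I do not expect a genuine obstacle here; the only thing to watch is the bookkeeping, namely consistently turning the additive errors furnished by $\mathcal{V}$ (of the form $O(\gamma)\,d^{i}$ at low levels and $O(\gamma + \tfrac{\log n}{\sqrt n})\,n$ at the top level) into $o(1)$ relative errors, which succeeds precisely because the leading constants $1-e^{-c}$ and $e^{-c}(1-e^{-c})$ are bounded away from $0$ under the hypothesis $c = \Theta(1)$.
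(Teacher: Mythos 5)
Your proposal is correct and matches the paper's (omitted) argument: the paper simply states that the corollary is ``easily deduced'' from the event $\mathcal{V}$ of \cref{lem:typical_expansion}, and your deduction---summing the sphere estimates over levels, using inclusion--exclusion on $|S_i(\{v,w\})| = |S_i(v)\cup S_i(w)|$ to control the overlaps, and converting additive $o(n)$ errors into relative ones via $c=\Theta(1)$---is exactly the intended elementary bookkeeping.
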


\section{Upper bound}\label{sec:upper_bound}
Throughout this section, we will condition on $\Gnp$ having the typical expansion property $\mathcal{V}$, so that in particular all statements in \cref{cor:expansion_cor} hold. To prove the upper bound, we use a standard application of the probabilistic method. Our strategy is to show that a random subset of vertices $R$ chosen by including each vertex independently with probability $r/n$ has the following property when $r = r(n)$ is sufficiently large with respect to $d = (n-1)p$: for any pair of vertices $v,w$, 
\begin{equation}\label{eq:distinguish_prob}
	\prob(R\text{ fails to distinguish }v\text{ and }w\,|\,\mathcal{V}) \leq \frac{1}{2n^{2}}.
\end{equation}
Then by a union bound, the probability that $R$ fails to distinguish some pair is at most $\binom{n}{2}\cdot\frac{1}{2n^{2}} \leq \frac{1}{4}$. By Markov's inequality, the probability that $R$ is at least $2\E[|R|] = 2r$ is at most $\frac{1}{2}$. Thus the probability that $R$ is a multiset resolving set and $|R| \leq 2r$ is at least $\frac{1}{4}$, allowing us to conclude that $\Gnp$ has a mutliset resolving set of size at most $2r$ conditional on $\mathcal{V}$. (In practice, $r$ will be $o(n)$, so the bound is nontrivial.) Since $\mathcal{V}$ holds w.h.p., we get $\beta_{\text{ms}}(\Gnp) \leq 2r$ w.h.p.

In the proof, we show that if $d = n^{x + O(\log^{-1}n)}$ for $x \in (0,1)$, the bound \eqref{eq:distinguish_prob} holds when the parameter $r$ is of the order $n^{y}$, where $y$ satisfies $f_{x}(y) \geq 4$. A heuristic explanation is as follows. Recall that $i^{*}$ is the largest $i$ so that $d^{i} = o(n)$. Note first that if $R$ fails to distinguish $v$ and $w$, then $|S_{i}^{R}(v) \setminus S_{i}(w)| = |S_{i}^{R}(w) \setminus S_{i}(v)|$ for all $i$, since each vertex in $S_{i}^{R}(\{v,w\})$ contributes equally to $|S_{i}^{R}(v)|$ and $|S_{i}^{R}(w)|$. For each $i \in \{ 0,1,\dots,i^{*} \} $, $|S_{i}^{R}(v) \setminus S_{i}(w)|$ and $|S_{i}^{R}(w) \setminus S_{i}(v)|$ are independent binomial random variables with means $(1+o(1))\mu_{i}$, where we define $\mu_{i} = \frac{d^{i}r}{n} = \Theta(n^{ix+y -1})$ when $r = n^{y}$. We can then bound the probability that $|S_{i}^{R}(v) \setminus S_{i}(w)| = |S_{i}^{R}(w) \setminus S_{i}(v)|$ by $\min\{1,\,O(\mu_{i}^{-1/2})\} = O\left(n^{-\frac{1}{2}\max\{ix+y-1,0\}} \right)$. (This bound corresponds to the maximum of the probability mass function of the $\text{Bin}(d^{i}, r/n)$ random variable, see \cref{lem:binomial}.) If all of the variables $|S_{i}^{R}(v) \setminus S_{i}(w)|$ and $|S_{i}^{R}(w)\setminus S_{i}(v)|$ were independent for $i \in \{ 0,1,\dots,i^{*} \} $, we could bound the probability that $R$ fails to distinguish $v$ and $w$ by 
\begin{equation}\label{eq:sketch_bound}
	O\left(\prod_{i=0}^{i^{*}}n^{-\frac{1}{2}\max\{ix+y-1,0\}} \right) = n^{-\frac{1}{2}\sum_{i=0}^{i^{*}}\max\{ix+y-1,0\} + O(\log^{-1}n)}.
\end{equation}
In the event, the independence assumption does not hold, since $(S_{i}(v) \setminus S_{i}(w)) \cap (S_{j}(w) \setminus S_{j}(v))$ can be nonempty for $i \neq j$. Even so, by taking a bit more care in the proof we can show that a bound like \eqref{eq:sketch_bound} still holds. When $\frac{1}{k+1} < x < \frac{1}{k}$ for some positive integer $k$, we have $i^{*} = k = \lfloor 1/x \rfloor$, and thus the righthand side of \eqref{eq:sketch_bound} is precisely $n^{-\frac{1}{2}f_{x}(y) + O(\log^{-1}n)}$, suggesting that $y$ must be large enough so that $f_{x}(y) \geq 4$ in order for \eqref{eq:distinguish_prob} to hold. An extension of this argument shows that the same situation holds when $x = \frac{1}{k}$. \cref{lem:y1_and_y4} gives that we may choose such a $y$ provided that $0  < x \leq 1/8$. 

The main result on the upper bound can now be properly stated.
\begin{comment}
\begin{lemma}\label{lem:y4} Let $x \in \left(0, \frac{1}{8}\right]$. Then there is a unique solution $y_{4} \in (0,1)$ to the equation $f_{x}(y) = 4$.
\end{lemma}

\begin{proof}
	We observe that, since the function $f_{x}$ is continuous, identically $0$ on $\left[0, 1-\left\lfloor\frac{1}{x}\right\rfloor x \right]$, and strictly increasing on $\left[1-\left\lfloor\frac{1}{x}\right\rfloor x, 1\right]$ by Lemma \ref{lem:exponent_func1}, as long as $f_{x}(1) > 4$, there is necessarily a unique solution to $f_{x}(y) = 4$ in $(0,1)$. We have
	$$
		f_{x}(1) = \sum_{i=0}^{\lfloor1/x\rfloor}ix = \frac{\left(\left\lfloor\frac{1}{x}\right\rfloor + 1\right)\left\lfloor\frac{1}{x}\right\rfloor \cdot x}{2}.
	$$
If $x \leq \frac{1}{8}$, 
	$$
		 \frac{\left(\left\lfloor\frac{1}{x}\right\rfloor + 1\right)\left\lfloor\frac{1}{x}\right\rfloor \cdot x}{2} > \frac{\left\lfloor\frac{1}{x}\right\rfloor}{2} \geq \frac{8}{2} = 4
	$$
where we use $(\lfloor1/x\rfloor + 1)x > 1$ for $x > 0$.
\end{proof}
\end{comment}

\begin{theorem}\label{thm:upper_bound} Let $x \in \left(0, \frac{1}{8} \right]$ be fixed and let $d = (n-1)p = n^{x + O(\log^{-1}n)}$. W.h.p., $\Gnp$ has a multiset resolving set of size at most $n^{y_{4} +O(\log^{-1}n)}$, where $y_{4} = y_{4}(x) \in (0,1)$ is the unique solution to $f_{x}(y) = 4$.
\end{theorem}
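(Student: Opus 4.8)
The plan is to carry out the probabilistic-method scheme described just above the statement. Fix a pair $v \neq w$ and let $R$ contain each vertex independently with probability $r/n$, where $r = n^{y}$ and $y := y_4 + K\log^{-1}n$ for a constant $K$ to be fixed at the very end. Conditioning on a graph satisfying $\mathcal V$, I will establish the per-pair estimate \eqref{eq:distinguish_prob}; the union bound over the $\binom n2$ pairs together with Markov's inequality then produces a multiset resolving set of size at most $2r = n^{y_4 + O(\log^{-1}n)}$ with probability bounded away from $0$, and since $\mathcal V$ holds w.h.p.\ this yields the theorem exactly as explained before the statement. Throughout, $R$ is drawn independently of the graph, so conditioning on $\mathcal V$ merely fixes the set sizes below to their typical values while leaving the coordinates of $R$ i.i.d.

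The first step is to reduce the failure event to a product of one-dimensional anti-concentration estimates. Put $A_i := S_i(v) \setminus S_i(w)$ and $B_i := S_i(w)\setminus S_i(v)$, so that $|A_i \cap R| = |S_i^R(v)\setminus S_i(w)|$ and likewise for $B_i$. As observed in the heuristic, failure to distinguish $v$ and $w$ forces $|A_i\cap R| = |B_i\cap R|$ for \emph{every} $i$; since failure requires all these equalities at once, it suffices to bound the probability of their intersection over any convenient family of levels, and I take the levels $0,1,\dots,m$ with $m := \lfloor 1/x\rfloor$. (Note $m = i^*$ when $x$ is not the reciprocal of an integer, and $m = i^*+1$ when $x = 1/k$.) By \cref{cor:expansion_cor}(i)--(ii), $|A_i| = |B_i| = (1+o(1))d^i$ for $i \le i^*$; and when $x = 1/k$, so that $c = \Theta(1)$ and the diameter is $i^*+2$, the symmetric form of \cref{cor:expansion_cor}(iv) gives $|A_{i^*+1}|, |B_{i^*+1}| = \Theta(n)$. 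Since $A_i$ and $B_i$ are disjoint, $|A_i\cap R|$ and $|B_i\cap R|$ are independent binomials with means of order $\mu_i = d^i r/n$, so \cref{lem:binomial} yields the single-level bound $\prob(|A_i\cap R| = |B_i\cap R|) = O(\mu_i^{-1/2}) = O(n^{-\frac12\max\{ix+y-1,0\}})$.

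The main obstacle is exactly the point the heuristic defers: the events $\big\{|A_i\cap R| = |B_i\cap R|\big\}$ are \emph{not} independent across $i$, because for $a \neq b$ the class $C_{a,b} := \{u : d(u,v) = a,\ d(u,w) = b\}$ lies in both $A_a$ and $B_b$. I would handle this by revealing the coordinates of $R$ in order of increasing distance from the pair, letting $\mathcal F_i$ record $R \cap N_i(\{v,w\})$; then $C_{a,b}$ is determined by $\mathcal F_{\min(a,b)}$, so the level-$i$ event is $\mathcal F_i$-measurable. Conditioned on $\mathcal F_{i-1}$, the ``fresh'' contribution to $|A_i\cap R|$ from the classes $C_{i,l}$ with $l > i$ is a binomial supported on $S_i(v)\setminus N_i(w)$, and it is independent of the analogous fresh contribution to $|B_i\cap R|$ supported on $S_i(w)\setminus N_i(v)$, since these two sets are disjoint and as yet unrevealed. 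The key size estimate is that the fresh set remains large: for $i \le i^*$ the part of $A_i$ removed by the conditioning, namely $\{u : d(u,v) = i,\ d(u,w) < i\} \subseteq N_{i-1}(w)$, has size $(1+o(1))d^{i-1} = o(d^i)$ by \cref{cor:expansion_cor}(i), so $|S_i(v)\setminus N_i(w)| = (1+o(1))d^i$; and for $i = i^*+1$ (the $x = 1/k$ case) the fresh set $S_{i^*+1}(v)\setminus N_{i^*+1}(w)$ has size $\Theta(n)$, again by \cref{cor:expansion_cor}(iv). Hence $\prob(|A_i\cap R| = |B_i\cap R| \mid \mathcal F_{i-1}) = O(\mu_i^{-1/2})$ almost surely, and peeling the levels off from the top with the tower rule gives
\[
	\prob\Big( \bigcap_{i=0}^{m} \big\{ |A_i \cap R| = |B_i \cap R| \big\}\ \Big|\ \mathcal V \Big) = O\Big( \prod_{i=0}^{m} \mu_i^{-1/2} \Big) = O\big( n^{-\frac12 f_x(y)} \big),
\]
because $\sum_{i=0}^{m}\max\{ix+y-1,0\} = f_x(y)$ by the very definition of $f_x$ and the choice $m = \lfloor 1/x\rfloor$.

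Finally I would pin down $K$. The constant implied in the last display is $C^{m+1}$ times $(1+o(1))$ for the universal constant $C$ of \cref{lem:binomial}, and since $m = i^*(x) + O(1)$ is bounded in $n$ this is $O(1)$. By \cref{lem:exponent_func2} (applicable as $f_x(y_4) = 4 > 0$), $f_x(y) = f_x(y_4 + K\log^{-1}n) \ge 4 + K\log^{-1}n$, so the whole bound is at most $O(1)\cdot e^{-K/2}\cdot n^{-2}$; choosing $K$ large enough makes this at most $\tfrac{1}{2}n^{-2}$ for all large $n$, which is precisely \eqref{eq:distinguish_prob}. I expect the genuinely delicate part to be the fresh-set bookkeeping in the conditioning step --- verifying that conditioning on the lower levels strips off only an $o(d^i)$ fraction of each sphere $S_i(v)$, and separately confirming that the boundary case $x = 1/k$ recovers its missing top-level factor through the extra level $i^*+1$, whose freshness is supplied by the vertices at distance $i^*+2$ (the true diameter) from $w$.
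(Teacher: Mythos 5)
Your proposal is correct and takes essentially the same route as the paper's proof: reveal $R$ outward from $\{v,w\}$ level by level, apply \cref{lem:binomial} at each level to a fresh binomial supported on a set of size $(1+o(1))d^{i}$ (for $i \le i^{*}$, via \cref{cor:expansion_cor}(ii)) or $\Theta(n)$ (at level $i^{*}+1$ when $x = 1/k$, via \cref{cor:expansion_cor}(iv)), and multiply the resulting bounds $\min\bigl\{1, O(\mu_i^{-1/2})\bigr\} = O\bigl(n^{-\frac12\max\{ix+y-1,0\}}\bigr)$ over the levels $0,\dots,\lfloor 1/x\rfloor$ before tuning the constant in $y = y_4 + O(\log^{-1}n)$ via \cref{lem:exponent_func2}. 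The only (immaterial) difference is that you retain two independent fresh binomials $X_i$, $Y_i$ and bound the maximal point mass of $X_i - Y_i$ by that of $Y_i$, whereas the paper first reveals all of $R \cap S_i(v)$ so that the level-$i$ event reduces to a single fresh binomial $Z_i$ hitting a prescribed value; just make sure each factor in your final product is capped at $1$ (as your $\max\{\cdot,0\}$ exponents already indicate) rather than literally $\mu_i^{-1/2}$.
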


In the proof we also use the following technical result, which bounds the maximum of the probability mass function of a binomial random variable. 

\begin{lemma}\label{lem:binomial}
	Let $Z$ be a binomial random variable with mean $\mu$. Then 
	$$
		\max_{z}\prob(Z = z) \leq \min\left\{1, O\left(\frac{1}{\sqrt{\mu}} \right)\right\}.
	$$
\end{lemma}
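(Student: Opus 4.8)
The plan is to prove the two bounds in the minimum separately. The bound $\max_{z}\prob(Z=z) \le 1$ is immediate, since each $\prob(Z=z)$ is a probability; so the entire content of the lemma is the estimate $\max_{z}\prob(Z=z) = O(1/\sqrt{\mu})$. Moreover, this estimate only carries information when $\mu$ is large: if $\mu = O(1)$ then $1/\sqrt{\mu} = \Omega(1)$ and the trivial bound already suffices (after absorbing a constant into the $O(\cdot)$), so I may assume $\mu \to \infty$. Writing $Z \sim \mathrm{Bin}(n,p)$ with $q = 1-p$ and $\mu = np$, I would actually establish the sharper statement $\max_{z}\prob(Z=z) = O(1/\sqrt{npq})$ in terms of the standard deviation $\sigma := \sqrt{npq}$, and then deduce the lemma. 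In the regime in which it is applied, the relevant binomials are $\mathrm{Bin}(d^{i}, r/n)$ with $p = r/n = o(1)$, so $q = 1-o(1)$ and hence $\sigma^{2} = npq = (1-o(1))\mu$, which gives $1/\sigma = O(1/\sqrt{\mu})$. (More generally the deduction goes through whenever $p \le 1/2$, since then $npq \ge \mu/2$.)

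For the key estimate I would use an elementary ``spreading'' argument. Let $m$ be the mode of $Z$, so that $p_{z} := \prob(Z=z)$ is non-increasing for $z \ge m$, and recall $|m-\mu| \le 1$. The consecutive ratios are $\rho_{z} := p_{z+1}/p_{z} = \frac{(n-z)p}{(z+1)q}$, which are $\le 1$ for $z \ge m$. Telescoping, for $0 \le t \le \sigma$ one has $\ln(p_{m+t}/p_{m}) = \sum_{j=0}^{t-1}\ln \rho_{m+j}$; since $m+j = \mu + O(\sigma)$ throughout this range, a short computation gives $\ln \rho_{m+j} \ge -O((j+1)/\sigma^{2})$, so the sum is bounded below by $-O(t^{2}/\sigma^{2}) \ge -O(1)$. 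Hence $p_{m+t} \ge c\,p_{m}$ for a universal constant $c > 0$ and all $0 \le t \le \lfloor\sigma\rfloor$. Summing this window yields $1 \ge \sum_{t=0}^{\lfloor\sigma\rfloor} p_{m+t} \ge c(\lfloor\sigma\rfloor+1)\,p_{m} \ge c\,\sigma\,p_{m}$, which rearranges to $\max_{z}\prob(Z=z) = p_{m} \le \frac{1}{c\sigma} = O(1/\sqrt{npq})$, as desired. An equally valid alternative would be to estimate $\binom{n}{m}p^{m}q^{n-m}$ directly at the mode using Stirling's approximation, which produces the same $O(1/\sqrt{npq})$ bound.

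The only delicate point is the bookkeeping in the telescoping step: one must verify that the first-order term cancels, i.e.\ that $\rho_{\mu} = 1 - O(1/\sigma^{2})$, so that only a second-order $O((j+1)/\sigma^{2})$ contribution survives, and that replacing $m$ by $\mu$ (an $O(1)$ shift) together with the denominators $\sigma^{2} - O(\sigma)$ staying $\Theta(\sigma^{2})$ across the window does not spoil the uniform constant $c$. This is routine but is where all the care is needed; everything else is immediate. I would also flag that the estimate is genuinely a \emph{variance} bound: for $p$ close to $1$ the quantity $1/\sqrt{\mu}$ can be far smaller than $1/\sqrt{npq}$, so the stated inequality is sharp only because $p = o(1)$ in every application of the lemma, where $\sqrt{\mu}$ and $\sigma$ agree up to a $1+o(1)$ factor.
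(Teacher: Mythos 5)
Your argument is correct and complete, but it takes a different route from the one the paper gestures at. The paper omits the proof and indicates the ``mode plus Stirling'' approach: locate the mode at $z=\mu+O(1)$ and evaluate $\binom{n}{z}p^{z}q^{n-z}$ there via Stirling's formula, as in De Moivre--Laplace (this is the alternative you mention in passing). Your primary argument instead avoids Stirling entirely: you control the consecutive ratios $p_{z+1}/p_{z}$ near the mode to show that $p_{m+t}\geq c\,p_{m}$ across a window of width $\lfloor\sigma\rfloor$, and then sum the window against the total mass $1$ to get $p_{m}\leq 1/(c\sigma)$. This is more elementary and arguably more robust, at the cost of the telescoping bookkeeping you flag (which does check out: for $0\leq j\leq\sigma$ one has $\ln\rho_{m+j}=-j/(nq)-(j+1)/(np)+o\bigl((j+1)/\sigma^{2}\bigr)$, and both first-order terms are $O((j+1)/\sigma^{2})$, so the constant is uniform once $\sigma\to\infty$; when $\sigma=O(1)$ the trivial bound already gives $O(1/\sigma)$). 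Your closing observation is also a genuine and worthwhile catch: the natural conclusion of either argument is $\max_{z}\prob(Z=z)=O(1/\sqrt{npq})$, and since $npq\leq\mu$ this does \emph{not} imply the stated $O(1/\sqrt{\mu})$ bound for arbitrary binomials --- e.g.\ $\mathrm{Bin}(n,1-1/n)$ has mean $n-1$ but modal probability $\Theta(1)$. The lemma as written is therefore only valid under an implicit assumption such as $p\leq 1/2$, which holds in every application in the paper (there $p=r/n=o(1)$), but the hypothesis ought to be stated.
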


We omit the proof of \cref{lem:binomial}, as it follows without too much difficulty from the observation that the maximum of the probability mass function is attained at a $z$ satisfying $z = \mu + O(1)$ and application of Stirling's formula (similar to the proof of the De Moivre-Laplace Theorem, see e.g., \cite[VII.3, Theorem 1]{feller1957introduction}). Now we are ready to prove \cref{thm:upper_bound}.

\begin{proof}[Proof of \cref{thm:upper_bound}]
We condition on $\Gnp$ satisfying $\mathcal{V}$. Let $\epsilon = \frac{C}{\log n}$ with $C = C(x) >0$ a large constant to be determined later, let $y = y(n) = y_{4} + \epsilon$, and let $r = r(n) = n^{y} $. Let $R$ be a random subset of $V$ obtained by including each vertex $v$ in $R$ independently with probability $r/n$. Our goal is to show that for each pair $v,w \in V$, 
\begin{equation}\label{eq:to_show}
	\prob(R \text{ fails to distinguish }v\text{ and }w\,|\,\mathcal{V}) = n^{-\frac{1}{2}f_{x}(y) + O(\log^{-1}n)}.
\end{equation}
By Lemma~\ref{lem:exponent_func2} we have $f_{x}(y) \geq f_{x}(y_{4}) + \epsilon = 4 + \frac{C}{\log n}$. Thus we can choose $C$ sufficiently large so that the right-hand side of \eqref{eq:to_show} is at most $\frac{1}{2n^{2}}$, and hence by a union bound we have
$$
	\prob(R\text{ fails to distinguish some pair of vertices}\,|\,\mathcal{V}) \leq \binom{n}{2} \cdot \frac{1}{2n^{2}} \le \frac{1}{4}.
$$
By Markov's inequality the probability that $|R|$ exceeds $2\E[R] = 2r$ is at most $\frac{1}{2}$. Using this and the above, the probability that $R$ distinguishes all vertices and has size at most $2r = n^{y_{4} + O(\log^{-1}n)}$ is at least $\frac{1}{4}$. We may then conclude that any graph satisfying $\mathcal{V}$ has a multiset resolving set of size at most $n^{y_{4} +O(\log^{-1}n)}$, and thus $\Gnp$ has a multiset resolving set of size at most $n^{y_{4} + O(\log^{-1}n)}$ w.h.p.

Now we focus on showing \eqref{eq:to_show}. Fix $v$ and $w$. We will reveal the set $R$ in stages with respect to these two vertices. Initially none of $R$ is revealed. In stage $0$ we reveal $R \cap N_{0}(\{v,w\}) = \{v,w\}$. In stage $1$ we reveal $R \cap (N_{1}(\{v,w\}) \setminus N_{0}(\{v,w\}))$, and in general in stage $i$ we reveal $R \cap (N_{i}(\{v,w\}) \setminus N_{i-1}(\{v,w\}))$. We reveal $R$ in this way through stage $\lfloor 1/x \rfloor$, though we need to take some care to handle the final stage when $x = \frac{1}{k}$ for some integer $k \ge 8$, as in this case we have $i^{*} = \lfloor1/x\rfloor -1$. We start with the analysis of stages $0$ through $i^{*}$, which applies to all $x \in (0,1/8]$. 

In stage $0$, we simply reveal whether $v$ and $w$ are in $R$. We condition on neither vertex being in $r$, as this occurs with probability $1 - O(r/n) = 1-o(1)$. For $i \geq 1$, condition on the outcomes of stages $0,1,\dots,i-1$. To process stage $i$, we first reveal which vertices in $S_{i}(v) \setminus N_{i-1}(w)$ are in $R$ and also condition on these. At this point we have the following information:
	\begin{enumerate}
		\item $|S_{i}^{R}(v)|$ is deterministic (i.e., it is a function of the information revealed so far);
		\item $|S_{i}^{R}(w)|$ can be expressed as $z_{i} + Z_{i}$, where $z_{i} = |S_{i}^{R}(w) \cap N_{i}(v)|$ is deterministic and $Z_{i} = |S_{i}^{R}(w) \setminus N_{i}(v)|$ is a random variable; 
		\item $Z_{i}$ is a $\text{Bin}(s_{i}, r/n)$ random variable with $s_{i} = |S_{i}(w) \setminus N_{i}(v)|$.
	\end{enumerate}
By the event $\mathcal{V}$, we have $s_{i} = (1+o(1))d^{i} = (1+o(1))n^{ix + O(\log^{-1}n)} = \Theta(n^{ix}).$ Observe that $|S_{i}^{R}(v)|=|S_{i}^{R}(w)|$ if and only if $Z_{i} = |S_{i}^{R}(v)| - z_{i}$. Conditioned on the information revealed so far, the probability that $Z_{i} = |S_{i}^{R}(v)| - z_{i}$ is at most
\begin{eqnarray*}
	\max_{z}\prob(\text{Bin}(s_{i}, r/n) = z) &\leq& \min\left\{1, O\left(\sqrt{\frac{n}{rs_{i}}}\right) \right\}\\
	&=&O\left(\min\left\{1, \sqrt{\frac{n}{rn^{ix}}}\right\}\right)\\
	&=&O\left(\min\left\{1, n^{\frac{1-ix-y}{2}} \right\} \right)\\
	&=&O\left(n^{-\frac{1}{2}\max\{ ix+y-1, 0 \} }\right),
\end{eqnarray*}
where we use \cref{lem:binomial} for the inequality in the first line. Suppose $x \in \left(\frac{1}{k+1}, \frac{1}{k} \right)$ for some integer $k \ge 8$. In this case, we have $i^{*} = \lfloor 1/x \rfloor = k$, and from the above we inductively have the bound
\begin{eqnarray*}
	\prob\left(\bm{m}_{R}(v) = \bm{m}_{R}(w)\,|\,\mathcal{V}\right) &\leq& \prob\left(\bigcap_{i=0}^{i^{*}}\left\{|S_{i}^{R}(v)| = |S_{i}^{R}(w)|\right\}\,|\,\mathcal{V}\right)\\
	&\leq& \prod_{i=0}^{i^{*}}O\left(n^{-\frac{1}{2}\max\{ ix+y-1, 0 \} }\right)\\
	&=&O\left(n^{-\frac{1}{2}\sum_{i=0}^{\lfloor 1/x \rfloor}\max\{ix+y-1,0\} }\right)\\
	&=&n^{-\frac{1}{2}f_{x}(y)+ O(\log^{-1}n)},
\end{eqnarray*}
which establishes \eqref{eq:to_show} for this case. 

Suppose now that $x = \frac{1}{k}$ for some $k \ge 8$. For this case $i^{*} = \lfloor 1/x \rfloor -1 = k-1$ and $c = \frac{d^{i^{*}+1}}{n} = n^{O(\log^{-1}n)}= \Theta(1)$. We run stage $i^{*}+1$ in the same way as stages $0$ through $i^{*}$. Let $s_{i^{*}+1} = |S_{i^{*}+1}(w) \setminus N_{i^{*}+1}(v)|$, and observe by \cref{cor:expansion_cor} that $s_{i^{*}+1} = (1+o(1))e^{-c}(1-e^{-c})n = \Theta(n)$. Using \cref{lem:binomial}, we can then upper bound probability that $|S_{i^{*}+1}^{R}(v)| = |S_{i^{*}+1}^{R}(w)|$ (conditioned on the history) by
$$
	\max_{z}\prob(\text{Bin}(s_{i^{*}+1}, r/n) = z) \leq \min\left\{1, O\left(\sqrt{\frac{n}{rs_{i^{*}+1}}}\right) \right\} = O(r^{-1/2}) = n^{-y/2 + O(\log^{-1}n)}.
$$
We then have the bound
\begin{eqnarray*}
	\prob\left(\bm{m}_{R}(v) = \bm{m}_{R}(w)\,|\,\mathcal{V}' \right) &\leq& n^{-\frac{1}{2}\sum_{i=0}^{i^{*}}\max\{ix + y - 1,0\} + O(\log^{-1}n)} \cdot n^{-\frac{1}{2}y + O(\log^{-1}n)}\\
	&=&n^{-\frac{1}{2}\sum_{i=0}^{\lfloor 1/x \rfloor -1}\max\{ix + y - 1,0\} - \frac{1}{2}y + O(\log^{-1}n)}.
\end{eqnarray*}
Note that
\begin{eqnarray*}
	\sum_{i=0}^{\lfloor1/x\rfloor - 1}\max\{ix+y-1,0\} &=& f_{x}(y) - \max\{\lfloor1/x\rfloor x + y-1,0\}\\
	&=& f_{x}(y) - \max\{k \cdot \frac{1}{k} + y - 1,0\}\\
	&=&f_{x}(y) - y.
\end{eqnarray*}
From this it follows that
$$
	\prob\left(\bm{m}_{R}(v) = \bm{m}_{R}(w)\,|\,\mathcal{V}' \right) \leq n^{-\frac{1}{2}f_{x}(y) + O(\log^{-1}n)},
$$
which establishes \eqref{eq:to_show} for this case. The proof of the theorem is finished.
\end{proof}

\section{Lower bound}\label{sec:lower_bound}

To illustrate the proof of the lower bound, let $x \in \left(\frac{1}{k+1}, \frac{1}{k}\right]$ for some $k \geq 2$ and $d = (n-1)p = \Theta(n^{x})$. We will make use of the following result on the diameter of $\Gnp$ from~\cite{bollobas1998random}:

\begin{lemma}[\protect{\cite[Corollary 10.12]{bollobas1998random}}]\label{lem:diameter}
	Let $d = (n-1)p = \omega(\log n)$, and suppose
	
	$$
		\frac{d^{i-1}}{n} - 2\log n \to -\infty \quad\text{ and }\quad \frac{d^{i}}{n} - 2\log n \to \infty.
	$$
	Then, the diameter of $\Gnp$ equals $i$ w.h.p.
\end{lemma}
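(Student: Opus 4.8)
The plan is to establish the two one-sided bounds $\mathrm{diam}(\Gnp)\le i$ and $\mathrm{diam}(\Gnp)\ge i$ separately, each reduced to the single estimate, valid for a fixed pair $u\neq v$,
$$\prob\big(d(u,v)\ge k\big)=\exp\!\big(-(1+o(1))\,d^{\,k-1}/n\big),$$
which holds whenever $d^{\,k-2}=o(n)$. This comes from neighbourhood exposure. For $v\neq u$ one has $d(u,v)\ge k$ if and only if $v$ has no neighbour in $N_{k-2}(u)$, since any neighbour of $v$ in $N_{k-2}(u)$ certifies $d(u,v)\le k-1$ and, conversely, every vertex of $N_{k-1}(u)\setminus\{u\}$ has such a neighbour. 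I would reveal the breadth-first ball from $u$ out to depth $k-2$; this determines $N_{k-2}(u)$ and leaves the edges joining $v$ to the final sphere $S_{k-2}(u)$ unexposed, so that, on the event $v\notin N_{k-2}(u)$,
$$\prob\big(d(u,v)\ge k \mid \text{BFS to depth }k-2\big)=(1-p)^{|S_{k-2}(u)|}.$$
Since $\prob(v\in N_{k-2}(u))=O(d^{\,k-2}/n)=o(1)$ and, by \cref{lem:typical_expansion} and \cref{cor:expansion_cor}, $|S_{k-2}(u)|=(1+o(1))d^{\,k-2}$ w.h.p.\ whenever $d^{\,k-2}=o(n)$, using $p=(1+o(1))d/n$ gives $p\,|S_{k-2}(u)|=(1+o(1))d^{\,k-1}/n$ and hence the displayed estimate.

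For the upper bound it suffices to show that w.h.p.\ no pair lies at distance $\ge i+1$. Fixing $u$ and revealing the ball $N_{i-1}(u)$, each external vertex $v$ satisfies $d(u,v)>i$ exactly when $v$ sends no edge to $S_{i-1}(u)$; these edge sets are disjoint over distinct $v$ and are unexposed, so the corresponding events are independent given the ball. With $k=i+1$ in the central estimate,
$$\prob\big(\exists\,v:\ d(u,v)\ge i+1 \mid N_{i-1}(u)\big)\le n\,(1-p)^{|S_{i-1}(u)|}\le n\exp\!\big(-(1+o(1))d^{\,i}/n\big).$$
In the saturated regime $d^{\,i-1}=\omega(n)$ one instead uses $|S_{i-1}(u)|=\Omega(n)$ w.h.p., which makes the bound even smaller since $n^{2}(1-p)^{\Omega(n)}\le n^{2}e^{-\Omega(d)}=o(1)$ as $d=\omega(\log n)$. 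A union bound over the $n$ choices of $u$ then yields total failure probability at most
$$n^{2}\exp\!\big(-(1+o(1))d^{\,i}/n\big)=\exp\!\big(2\log n-(1+o(1))d^{\,i}/n\big)\to 0,$$
which is precisely the content of the hypothesis $d^{\,i}/n-2\log n\to\infty$.

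For the lower bound I would apply the second-moment method to the number $W$ of ordered pairs $(u,v)$ with $d(u,v)\ge i$. Taking $k=i$ in the central estimate,
$$\E W=n(n-1)\exp\!\big(-(1+o(1))d^{\,i-1}/n\big)=\exp\!\big(2\log n-(1+o(1))d^{\,i-1}/n\big)\to\infty,$$
which is where the hypothesis $d^{\,i-1}/n-2\log n\to-\infty$ is used (and which also guarantees $d^{\,i-2}=o(n)$, since $d=\omega(\log n)$, so that the central estimate applies). It then remains to prove $\Var(W)=o\big((\E W)^{2}\big)$, after which Chebyshev's inequality gives $W>0$ w.h.p.; a single pair at distance $\ge i$ forces $\mathrm{diam}(\Gnp)\ge i$, completing the proof.

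The main obstacle is this variance estimate: I must show that, after summing over all quadruples $(u,v,u',v')$, the pair-events $\{d(u,v)\ge i\}$ and $\{d(u',v')\ge i\}$ are asymptotically independent. The dominant contribution comes from quadruples of distinct vertices whose radius-$(i-2)$ balls are disjoint; there the two events genuinely factorise and contribute $(1+o(1))(\E W)^{2}$. The real work is bounding the remaining quadruples---those sharing a vertex, or whose balls overlap---by $o\big((\E W)^{2}\big)$, for which I would use the expansion bounds of \cref{cor:expansion_cor} to limit the overlap of two nearby balls, together with the fact that $\E W\to\infty$. A secondary but essential technical point, shared by both bounds, is the rigorous justification of the exposure step: one grows the BFS tree from $u$ while setting aside all edges incident to the target vertices, so that these remain as fresh independent randomness for the final connection step.
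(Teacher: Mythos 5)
The paper does not prove this lemma at all: it is imported verbatim from \cite[Corollary~10.12]{bollobas1998random}, so there is no internal proof to compare against. Your strategy---a union bound for $\mathrm{diam}\le i$ and a second-moment argument for $\mathrm{diam}\ge i$, both driven by the estimate $\prob(d(u,v)\ge k)=\exp(-(1+o(1))d^{k-1}/n)$ obtained by neighbourhood exposure---is indeed the standard route to this result, and the exposure step (reveal the BFS ball to depth $k-2$, leaving the edges from $v$ to $S_{k-2}(u)$ fresh) is set up correctly. The problem is that the argument stops exactly where the work begins: the whole lower bound rests on $\Var(W)=o((\E W)^2)$, and you explicitly defer this. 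Controlling $\prob(d(u,v)\ge i \text{ and } d(u',v')\ge i)$ for quadruples sharing a vertex or with overlapping balls---in particular the case $u=u'$, where the two events are positively correlated through the common ball $N_{i-2}(u)$---is the technical heart of the theorem, and it must be done in a regime where $\E W$ may grow arbitrarily slowly, so there is no slack to absorb crude bounds. As written this is a plan for a proof rather than a proof.

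There is also a quantitative gap in the central estimate itself. The hypotheses only guarantee gaps of $\omega(1)$: e.g.\ $d^{i}/n=2\log n+g(n)$ with $g(n)\to\infty$ possibly very slowly. An unquantified $(1+o(1))$ multiplicative error in the exponent contributes an additive error of $o(1)\cdot d^{i}/n$, which can be as large as $o(\log n)$ and hence can swamp $g(n)$, reversing the conclusion $n^{2}\exp(-(1+o(1))d^{i}/n)\to 0$; the symmetric issue afflicts $\E W\to\infty$ on the other side. The concentration supplied by \cref{lem:typical_expansion} gives relative error $O(\gamma)$ with $\gamma=\sqrt{\log n/d}$, hence an additive error of order $\log^{3/2}n/\sqrt{d}$ in the exponent, which is $o(1)$ only when $d=\omega(\log^{3}n)$; for $\omega(\log n)=d=O(\log^{3}n)$ your bound does not close, and one needs the finer analysis (essentially the Poisson-type computation of the number of distant pairs) that Bollob\'as actually carries out. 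So the sketch, taken literally, does not establish the lemma over the full stated range of $d$.
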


For our choice of $x$, by \cref{lem:diameter} the diameter of $\Gnp$ is $k+1$ w.h.p. This means that for any set $R$ and any vertex $v$, the multiset signature $\bm{m}_{R}(v)$ of $v$ with respect to $R$ is w.h.p.\ fully determined by the first $k$ coordinates, i.e., by $|S_{i}^{R}(v)|$ for $i \in \{ 0,1,\dots,k\} $. Using a simple counting argument, we will show that w.h.p.\ in $\Gnp$, for any set $R$, at least $(1+o(1))\frac{n}{2}$ vertices have the property that $|S_{i}^{R}(v)| \leq \max\left\{1, O\left(\frac{d^{i}|R|}{n}\right)\right\}$ for all $i  \in \{ 0,1,\dots,k\} $. For these \textit{typical} vertices, when $|R| \leq n^{y}$, the number of possible signatures with respect to $R$ is then crudely at most
$$
	\prod_{i=0}^{k} \max\left\{1, O\left(\frac{d^{i}|R|}{n}\right)\right\} = O\left(\prod_{i=0}^{k} n^{\max\{ix + y - 1, 0\}} \right) = n^{f_{x}(y) + O(\log^{-1} n)}.
$$
If the righthand side above is less than, say, $0.49n$, then we may conclude that there must be some pair of typical vertices $v,w$ with $\bm{m}_{R}(v) = \bm{m}_{R}(w)$. Recall that by \cref{lem:y1_and_y4} there exists a solution $y_{1} = y_{1}(x) \in (0,1]$ to the equation $f_{x}(y) = 1$ when $0 < x \leq 1/2.$ Let $y = y_{1} -\frac{C}{\log n}$ for a constant $C > 0$. By \cref{lem:exponent_func2} we have $f_{x}(y) \leq f_{x}(y_{1}) - \frac{C}{\log n} = 1 - \frac{C}{\log n}$. Thus if $C$ is chosen sufficiently large, then $n^{f_{x}(y) + O(\log^{-1}n)}$ can be made less than $0.49n$, which yields the desired result. 

\begin{comment}
\begin{lemma}\label{lem:y1}
	Let $0 < x \leq \frac{1}{2}$. Then there is a unique solution $y_{1} = y_{1}(x) \in (0,1)$ to the equation $f_{x}(y_{1}) = 1$. 
\end{lemma}

\begin{proof}
	As in the proof of \cref{lem:y4}, it suffices to show that $f_{x}(1) > 1$ for all $0 < x \leq \frac{1}{2}$. For \pt{any} such $x$, 
	$$
		f_{x}(1) = \frac{\left(\left\lfloor \frac{1}{x} \right\rfloor+1 \right)\left\lfloor \frac{1}{x} \right\rfloor x}{2} > \frac{\left\lfloor\frac{1}{x}\right\rfloor}{2} \geq 1,
	$$
	\pt{which finishes the proof of the lemma.} \pc{Since the proof is the same, maybe we should combine the two lemmas and have statement for $y_1$ and $y_4$ in one place?} 
\end{proof}
\end{comment}

\begin{theorem}
	Let $x \in \left(0, \frac{1}{2}\right]$ be fixed and let $d = (n-1)p = n^{x + O(\log^{-1}n)}$. W.h.p., $\Gnp$ has no multiset resolving set of size less than $n^{y_{1} + O(\log^{-1}n)}$, where $y_{1} = y_{1}(x) \in (0,1)$ is the unique solution to $f_{x}(y) = 1$.
\end{theorem}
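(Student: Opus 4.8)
The plan is to reduce to the setting of the illustration preceding the theorem: fix an integer $k \geq 2$ and suppose $x \in \left(\frac{1}{k+1}, \frac{1}{k}\right]$, noting that these intervals exhaust $\left(0,\frac12\right]$, so that $\lfloor 1/x\rfloor = k$ throughout. I would condition on the event $\mathcal{V}$ (which holds w.h.p.\ by \cref{lem:typical_expansion}) and on $\text{diam}(\Gnp) = k+1$. The latter holds w.h.p.\ by \cref{lem:diameter}: for $x \in \left(\frac{1}{k+1},\frac{1}{k}\right)$ one has $d^{k}/n \to 0$ while $d^{k+1}/n = n^{(k+1)x-1+O(\log^{-1}n)} \to \infty$ polynomially fast, and for $x = \frac{1}{k}$ one has $d^{k}/n = \Theta(1)$ while $d^{k+1}/n = \Theta(d) = \omega(\log n)$; in either case the hypotheses of \cref{lem:diameter} are met with $i = k+1$. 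I then set $y = y_{1} - \frac{C}{\log n}$ for a large constant $C = C(x)$ and aim to show that \emph{every} set $R$ with $|R| \le n^{y}$ fails to be multiset resolving, which yields $\beta_{\textup{ms}}(\Gnp) > n^{y_{1}+O(\log^{-1}n)}$ w.h.p.

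The technical heart is a uniform ``typical vertex'' estimate. Call $v$ \emph{typical} for $R$ if $|S_{i}^{R}(v)| \le M_{i} := \max\{1,\, C_{i}\, d^{i}|R|/n\}$ for every $i \in \{0,1,\dots,k\}$, where the $C_{i}$ are large constants. The key observation is the deterministic identity
$$
	\sum_{v \in V} |S_{i}^{R}(v)| = \sum_{r \in R}|S_{i}(r)|,
$$
which, once we condition on $\mathcal{V}$, evaluates via \cref{cor:expansion_cor} to $(1+o(1))|R|d^{i}$ for $i \le i^{*}$ and to $O(|R|d^{i})$ for the top level $i = i^{*}+1$ (the case $x = 1/k$, where $c = d^{i^{*}+1}/n = \Theta(1)$ and $1-e^{-c} \le c$). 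Crucially this holds \emph{simultaneously for all $R$}, since it only uses the per-vertex sphere sizes guaranteed by $\mathcal{V}$. A single application of Markov's inequality at each level $i$ then bounds the number of vertices non-typical at level $i$ by $(1+o(1))n/C_{i}$: when $C_{i}d^{i}|R|/n \ge 1$ the threshold is $M_{i} = C_{i}d^{i}|R|/n$ and $\#\{|S_{i}^{R}(v)| > M_{i}\} \le (1+o(1))|R|d^{i}/M_{i} = (1+o(1))n/C_{i}$; when $C_{i}d^{i}|R|/n < 1$ the threshold is $M_{i} = 1$ and non-typicality means $|S_{i}^{R}(v)| \ge 2$, giving $\#\{\,\cdot\,\} \le \tfrac12(1+o(1))|R|d^{i} < (1+o(1))n/C_{i}$. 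Since there are only $k+1 = O(1)$ levels and the $C_{i}$ may be taken arbitrarily large, at least $0.99\,n$ vertices are typical for $R$, again simultaneously over all $R$.

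For the counting step, because $\text{diam}(\Gnp) = k+1$ and $\sum_{j}|S_{j}^{R}(v)| = |R|$, the coordinate $|S_{k+1}^{R}(v)|$ is determined by $|S_{0}^{R}(v)|,\dots,|S_{k}^{R}(v)|$, so a vertex's full multiset signature $\bm{m}_{R}(v)$ is determined by its first $k+1$ coordinates. Hence the number of distinct signatures among typical vertices is at most
$$
	\prod_{i=0}^{k} M_{i} = \prod_{i=0}^{k}\max\{1,\, C_{i}d^{i}|R|/n\} = n^{\sum_{i=0}^{k}\max\{ix+y-1,0\} + O(\log^{-1}n)} = n^{f_{x}(y) + O(\log^{-1}n)},
$$
where we used $\lfloor 1/x\rfloor = k$ and absorbed the constants $C_{i}$ into the $n^{O(\log^{-1}n)}$ factor. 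Applying \cref{lem:exponent_func2} with increment $C/\log n$ (valid since $y > 1 - \lfloor 1/x\rfloor x$ for $n$ large, and the degenerate case $f_{x}(y)=0$ is only more favourable) gives $f_{x}(y) \le f_{x}(y_{1}) - \frac{C}{\log n} = 1 - \frac{C}{\log n}$, so for $C$ large relative to the implied constants the displayed count is below $0.49\,n$. Comparing with the $\ge 0.99\,n$ typical vertices, the pigeonhole principle forces distinct typical $v \ne w$ with $\bm{m}_{R}(v) = \bm{m}_{R}(w)$; thus $R$ is not multiset resolving. As $R$ was an arbitrary set of size at most $n^{y} = n^{y_{1}+O(\log^{-1}n)}$, the theorem follows.

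The main obstacle is making the typical-vertex estimate hold uniformly over all $2^{n}$ subsets $R$ rather than a single fixed one; a naive union bound over $R$ would be hopeless. This is exactly what the averaging identity circumvents: after conditioning on the one w.h.p.\ event $\mathcal{V}$, the bound $\sum_{v}|S_{i}^{R}(v)| = (1+o(1))|R|d^{i}$ becomes a deterministic consequence of the individual sphere sizes, so the Markov argument runs separately for each $R$ at no further probabilistic cost. A secondary point to handle with care is the boundary case $x = 1/k$, where the relevant top level $k$ is the shell $i^{*}+1$ of size $\Theta(n)$ rather than $\Theta(d^{k})$; there one must verify, as above, that $(1-e^{-c})n \le d^{k} = cn$ so that the same threshold $M_{k}$ and count $n^{\max\{kx+y-1,0\}} = n^{y}$ still apply.
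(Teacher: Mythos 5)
Your proposal is correct and follows essentially the same route as the paper: condition on $\mathcal{V}$ and on $\mathrm{diam}(\Gnp)=k+1$, show uniformly over all $R$ that almost all vertices have small sphere-counts at every level, bound the number of available signatures by $n^{f_{x}(y)+O(\log^{-1}n)}$, and apply pigeonhole. Your averaging identity plus Markov is exactly the paper's double-counting of the pairs $P_{i}^{R}=\{(v,w): v\in R,\ w \text{ atypical},\ d(v,w)\le i\}$ (the paper works with balls $N_{i}^{R}(v)$ rather than spheres and gets $(1+o(1))n/2$ typical vertices instead of $0.99n$, but these are cosmetic differences).
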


Note that the diameter of $\Gnp$ is $2$ for $x  > \frac{1}{2}$, in which case there is no multiset resolving set for $\Gnp$ by a result from \cite{simanjuntak2017multiset}.

\begin{proof}
	Condition on the event $\mathcal{V}$. Suppose that $x \in \left(\frac{1}{k+1}, \frac{1}{k} \right]$ for some positive integer $k \geq 2$. Then, by \cref{lem:diameter}, the diameter of $\Gnp$ is $k+1$ w.h.p., and we also condition on this. Now, let $R \subseteq V$ have size $|R| = r$. For $i \in \{ 0,1,\dots,k\}$, we say that a vertex $v$ is $i$-\textit{atypical} with respect to $R$ if
$$
	|N_{i}^{R}(v)| \geq \max\left\{2(k+1)\frac{|N_{i}(v)|r}{n},\, 1\right\}
$$
and is $i$-\textit{typical} with respect to $R$ otherwise. If $v$ is $i$-typical for all $i \in \{ 0,1,\dots,k \}$ with respect to $R$, we simply say $v$ is \emph{typical} with respect to $R$. Our goal is to show that there are at least $(1+o(1))n/2$ typical vertices.

Let $A_{i}^{R}$ be the set of vertices which are $i$-atypical with respect to $R$, and let
$$
	P_{i}^{R} = \{(v,w)\,:\,v \in R,\,w \in A_{i}^{R},\,d(v,w) \leq i\}.
$$
On one hand, for any $i \in \{ 0,1,\dots,k \}$, we have
$$
	|P_{i}^{R}| \geq 2(k+1)\frac{\min_{w \in A_{i}^{R}}|N_{i}(w)|r}{n}|A_{i}^{R}|,
$$
since there are at least $2(k+1)\frac{\min_{w \in A_{i}^{R}}|N_{i}(w)|r}{n}$ ``partners'' in $R$ for a fixed $w \in A_{i}^{R}$. On the other hand,
$$
	|P_{i}^{R}| \leq \max_{v \in R}|N_{i}(v)|r,
$$
since there are at most $\max_{v \in R}|N_{i}(v)|$ partners in $A_{i}^{R}$ for a fixed $v \in R$. Thus,
$$
	|A_{i}^{R}| \leq \frac{n}{2(k+1)}\cdot\frac{\max_{w \in A_{i}^{R}}|N_{i}(w)|}{\min_{v \in R}|N_{i}(v)|} = (1+o(1))\frac{n}{2(k+1)}
$$
where the last equality holds from the fact that $|N_{i}(w)| = (1+o(1))|N_{i}(v)|$ for all pairs $v$ and $w$ and all $i \in \{ 0,1,\dots,k \}$ by \cref{cor:expansion_cor}~(i). It follows that the number of vertices which are $i$-atypical for some $i \in \{ 0,1,\dots,k \}$ is at most $(1+o(1))\frac{n}{2}$, i.e., there are at least $(1+o(1))\frac{n}{2}$ typical vertices. Note that this holds for \textit{any} subset $R$.

Now, since the diameter is $k+1$, for any $R \subseteq V$ and $v \in V$, the signature $\bm{m}_{R}(v)$ is fully determined by the coordinates $0,1,\dots,k$. First suppose that $\frac{1}{k+1} < x < \frac{1}{k}$. Note that in this case we have $i^{*} = k$. Let $\epsilon = \frac{C}{\log n}$ with $C>0$ a constant to be determined, and for a contradiction suppose that there exists a multiset resolving set $R$ of size $n^{y}$ with $y \leq y_{1} - \epsilon$. By \cref{cor:expansion_cor}~(i), for any $v \in V$ and any $i \in \{ 0,1,\dots,k \}$, we have $|N_{i}(v)| = (1+o(1))d^{i} = \Theta(n^{ix}).$ Thus, if $v$ is typical with respect to $R$, then for any $i \in \{ 0,1,\dots,k \}$, there are at most
\begin{equation}\label{eq:typical_coordinate}
	\max\left\{(1+o(1))2(k+1)\frac{d^{i}r}{n},\,1 \right\} = O\left(\max\{n^{ix +y - 1}, 1\} \right) =O\left(n^{\max\{ix+y-1,0\}} \right)
\end{equation}
possibilities for the $i$th coordinate of $\bm{m}_{R}(v)$. Therefore, the number of signatures which are allowed for typical vertices is at most
$$
	O\left(\prod_{i=0}^{k}n^{\max\{ix+y-1,0\}}\right)= n^{f_{x}(y) + O(\log^{-1}n)} \leq n^{f_{x}(y_{1}) - \epsilon + O(\log^{-1}n)},
$$
where in the final inequality we use that $f_{x}(y) \leq f_{x}(y_{1} - \epsilon) \leq f_{x}(y_{1}) - \epsilon$ by Lemmas~\ref{lem:exponent_func1} and~\ref{lem:exponent_func2}. (Note that $f_{x}(y_{1} - \epsilon) > 0$ holds for $n$ sufficiently large since $f_{x}$ is continuous by \cref{lem:exponent_func1} and $\epsilon = o(1)$, so we can indeed apply \cref{lem:exponent_func2}.) If $C$ is chosen large enough, then the right-hand-side above can be made smaller than, say, $0.49n$. By our previous arguments, there are at least $(1+o(1))\frac{n}{2}$ typical vertices with respect to $R$. But since there are only $0.49n$ possible signatures for these vertices, there must be a pair $v,w$ of typical vertices with $\bm{m}_{R}(v) = \bm{m}_{R}(w)$, which gives us the desired contradiction.

The argument above extends easily to the case that $x = \frac{1}{k}$ for some positive integer $k \ge 2$. In this case we have $i^{*} = k-1$, so the bound \eqref{eq:typical_coordinate} holds for any $i \in \{ 0,1,\dots,k-1 \}$ in this case. Trivially, we have that $|N_{k}(v)| = O(n)$ for any $v$, which means there are at most $O\left(n\cdot\frac{r}{n}\right) = O(r) = O(n^{y})$ possibilities for the $k$th coordinate of any typical vertex's signature with respect to $R$. There are then at most 
$$
	O\left( n^{y}\cdot \prod_{i=0}^{k-1}n^{\max\{ix+y-1,0\}} \right) = n^{f_{x}(y) + O(\log^{-1}n)}
$$
possible signatures for typical vertices, and the rest of the argument follows as before.
\end{proof}

\section{Conclusion}\label{sec:conclusion}

We have shown that for the regime $d = (n-1)p = n^{x + O(\log^{-1}n)}$ for fixed $0<x \leq 1/8$, the muiltiset metric dimension $\beta_{\text{ms}}(\Gnp)$ of $\Gnp$ satisfies $\beta_{\text{ms}}(\Gnp) \leq n^{y_{4} + O(\log^{-1}n)}$ w.h.p., and for fixed $0 < x \leq \frac{1}{2}$, $\beta_{\text{ms}}(\Gnp)  \geq n^{y_{1} + O(\log^{-1}n)}$ w.h.p., where $y_{4}$ and $y_{1}$ are constants in $(0,1)$ which are explicitly computable from $x$. We chose the scaling $d = n^{x +O(\log^{-1}n)}$ primarily to make our results cleaner to state. However, the upper bound can be extended to the more general setting $d = \omega(\log n)$; indeed our argument depends only on $\Gnp$ satisfying the typical expansion profile guaranteed by \cref{lem:typical_expansion}, which holds w.h.p.\ for $d = \omega(\log n)$. In principle, the lower bound can also be extended to this regime, though the arguments there also depend on knowing the diameter of $\Gnp$ w.h.p.

We propose two directions for future research. The first is to determine whether $\beta_{\text{ms}}(\Gnp)$ is finite w.h.p.\ for $d = n^{x}$, $\frac{1}{8} < x \leq \frac{1}{2}$. The second is to tighten the gap between the lower and upper bounds for $\beta_{\text{ms}}(\Gnp)$ for $x \in (0,\frac{1}{2})$. New proof strategies are likely needed for both problems.

\bibliography{multiset_dimension_refs}

\end{document}